\newcommand{\field}[1]{{\mathbb{#1}}}
\newcommand{\C}{\field{C}}
\newcommand{\N}{\field{N}}
\newcommand{\R}{\field{R}}
\newcommand{\Z}{\field{Z}}
\newcommand{\CC}{\mathcal{{C}}}
\newcommand{\dd}{\,\mathrm{d}}
\newcommand{\intpart}[1]{\left\lfloor#1\right\rfloor}
\newcommand{\eps}{\varepsilon}
\newcommand{\ph}{{\vphantom{*}}}                         
\newcommand{\Gbinom}[2]{\genfrac{[}{]}{0pt}{}{#1}{#2}}   
\DeclareMathSymbol{\sm}{\mathbin}{AMSa}{"39}             
\theoremstyle{remark}
\newtheorem*{remark*}{Remark}
\newtheorem*{example*}{Example}
\newtheorem*{conjecture*}{Conjecture}
\theoremstyle{plain}
\newtheorem{proposition}{Proposition}
\newtheorem{theorem}{Theorem}
\newtheorem{lemma}{Lemma}
\newtheorem{corollary}{Corollary}
\begin{document}
\title[A one parameter family of operators]{A one parameter family of Volterra-type operators}

\author[F.~Battistoni]{Francesco Battistoni*}
\address[F.~Battistoni]{Dipartimento di Matematica per le Scienze Economiche, Finanziarie ed Attuariali\\
         Universit\`{a} Cattolica\\
         via Necchi 9\\
         20123 Milano\\
         Italy}
\email{francesco.battistoni@unicatt.it}
\thanks{*ORCID: \url{https://orcid.org/0000-0003-2119-1881}}

\author[G.~Molteni]{Giuseppe Molteni\textsuperscript{\textdagger}}
\address[G.~Molteni]{Dipartimento di Matematica\\
         Universit\`{a} di Milano\\
         via Saldini 50\\
         20133 Milano\\
         Italy}
\email{giuseppe.molteni1@unimi.it}
\thanks{\textsuperscript{\textdagger}Corresponding author; ORCID: \url{https://orcid.org/0000-0003-3323-4383}}

\keywords{Volterra operator, spectral radius, spectrum}

\subjclass[2020]{47G10, 47B38}

\begin{abstract}
For every $\alpha \in (0,+\infty)$ and $p,q \in (1,+\infty)$ let $T_\alpha$ be the operator $L^p[0,1]\to
L^q[0,1]$ defined via the equality $(T_\alpha f)(x) := \int_0^{x^\alpha} f(y) \dd y$. We study the norms
of $T^\ph_\alpha$ for every $p$, $q$. In the case $p=q$ we further study its spectrum, point spectrum,
eigenfunctions, and the norms of its iterates. Moreover, for the case $p=q=2$ we determine the point
spectrum and eigenfunctions for $T^*_\alpha T^\ph_\alpha$, where $T^*_\alpha$ is the adjoint operator.
\end{abstract}

\maketitle

\section{Introduction}\label{sec:1A}
Let $p,q\in (1,+\infty)$ and let $p'$, $q'$ denote the conjugated exponent of $p$ and $q$, respectively.
Pick any real $\alpha >0$ and let $T_\alpha\colon L^p[0,1] \to L^q[0,1]$ be defined as
\[
 (T_\alpha f)(x) := \int_0^{x^\alpha} f(y) \dd y.
\]
This operator can be considered as an interpolation depending on $\alpha$ of the projector on the
subspace of constants $T_0$, and the null operator $T_\infty$, passing through classical Volterra
operator $V = T_1$.

In the spirit of earlier introductions by Tonelli~\cite{Tonelli} and Tikhonov~\cite{Tikhonov} of the
class of Volterra operators, an operator $T$ should be called ``Volterra type'' when $(T f)(x)$ depends
on the values of $f$ only in its ``past'', i.e. in $[0,x]$. In effect, more general classes have also
been considered and termed ``Volterra'', see for example~\cite{DrakhlinLitsyn} and the extensive
bibliography cited therein, but usually modern presentations of this class of operators still retain in
some form that property, plus extra hypotheses about their domain (Hilbert spaces, Banach space, general
$L^p$ spaces) and about the regularity of the involved kernel associated with the operator.
Just to cite the most relevant for this work, this is the case of Barnes~\cite{Barnes}, where general
properties of the spectrum for Volterra operators on $L^p$ spaces are studied,
Eveson~\cite{Eveson,Eveson2}, where the norm of iterations of Volterra operators with convolutive kernels
are studied on $L^2[0,1]$ and in $L^p[0,1]$ cases respectively,
or even Adell and Gallardo--Gutierrez~\cite{AdellGallardo-Gutierrez}, where the norm of the
Liouville--Riemann fractional integration operators
%
are studied for large values of the parameter, again in $L^p$ spaces.\\
Under this point of view and despite the name we have adopted to denote them, operators $T_\alpha$ with
$\alpha < 1$ represent a deviation to this tradition, since for them $(T_\alpha f)(x)$ depends on the
values of $f$ on the strictly larger range $[0,x^\alpha]$. In fact, these operators show more varied and
somewhat unexpected behaviours; for example they are not quasi-nilpotent, see Theorem~\ref{th:3A}.
\smallskip\\
Every $T_\alpha$ is a compact operator, as one can readily deduce from the Kolmogorov--Riesz Theorem and
the fact that
\[
 |(T_\alpha f)(x) - (T_\alpha f)(y)| \leq |x^\alpha - y^\alpha|^{1/p'} \|f\|_p
\]
for every $x,y\in[0,1]$ and every $f$ in $L^p[0,1]$.
Moreover, the adjoint $T_\alpha^\ast\colon L^{q'}[0,1] \to L^{p'}[0,1]$ is
\[
T_\alpha^\ast f = \int_{x^{1/\alpha}}^1 f(u) \dd u,
\]
so that we have the equality
\begin{equation}\label{eq:1A}
(T_0 - T_\alpha)^* = T_{1/\alpha},
\end{equation}
where now $T_{1/\alpha}$ is considered as a map $L^{q'}[0,1] \to L^{p'}[0,1]$.\\
Equation~\eqref{eq:1A} shows that there are relations connecting operators $T_\alpha$, $T_{1/\alpha}$,
$T^*_\alpha$ and $T^*_{1/\alpha}$, so that it is a good idea to consider the full family
$\{T_\alpha\}_{\alpha>0}$ as a whole, with the classical Volterra operator $V=T_1$ playing in some sense
a pivotal role under the correspondence $\alpha \leftrightarrow 1/\alpha$.
\medskip

In Section~\ref{sec:2A} we study the norm of $T_\alpha$. In Section~\ref{sec:3A} we determine its
spectrum $\sigma$, point spectrum $\sigma_0$ and eigenfunctions. Section~\ref{sec:4A} contains a result
about the behaviour of the norm of iterates $T_\alpha^n$ of high order. Finally, Section~\ref{sec:5A} is
devoted to the special case $p=q=2$, where we can explicitly describe the spectrum and the eigenvalues of
$T^*_\alpha T^\ph_\alpha$, and deduce the exact value of the norm of $T_\alpha$.
\bigskip\\ %
{\bf Notations:} %
We frequently use Landau symbols $f(x) = O(g(x))$, $f(x) = o(g(x))$ and $f(x) \asymp g(x)$ as $x\to
x_0\in \R\cup\{\pm\infty\}$ with the meaning that $|f(x)/g(x)|$ stays bounded, $|f(x)/g(x)|$ goes to $0$
and both $|f(x)/g(x)|$ and $|g(x)/f(x)|$ stay bounded, respectively. The presence of a subscript in any
of such symbols means that the symbol is not uniform in the parameter appearing in the subscript.
\bigskip\\ %
{\bf Acknowledgments:} %
The first author is member of the GNAMPA research group, the second author of the GNSAGA research group.

\section{Norm}\label{sec:2A}
Each operator $T_\alpha$ is positive, so that its norm can be computed using nonnegative test functions.
For every such a function $f$ and every fixed $x$ the map $\alpha \to (T_\alpha f)(x)$ is decreasing so
that also the map $\alpha \to \|T_\alpha\|_{p,q}$ decreases.
%
The following result proves that the maps $\alpha \to T_\alpha$ and $\alpha \to \|T_\alpha\|_{p,q}$ are
H\"{o}lder continuous in $[0,+\infty)$ of order $1/p'$, at least.
\begin{theorem}\label{th:1A}
Let $p,q \in (1,+\infty)$ and let $p'$ be the conjugated exponent of $p$. Let $\alpha,\beta \geq 0$. Then
\[
\big|\|T_\alpha\|_{p,q} - \|T_\beta\|_{p,q}\big|
\leq \|T_\alpha - T_\beta\|_{p,q}
\leq |\alpha-\beta|^{1/p'} \Gamma(\tfrac{q}{p'} + 1)^{1/q}.
\]
\end{theorem}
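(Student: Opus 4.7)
The left inequality is just the reverse triangle inequality for the operator norm, so the content is the right inequality. My plan is to bound $\|T_\alpha - T_\beta\|_{p,q}$ by bounding the integrand pointwise with Hölder and then controlling the resulting scalar integral via an elementary logarithmic estimate.

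Assume without loss of generality that $\alpha \geq \beta \geq 0$. For $x\in[0,1]$ one then has $x^\alpha \leq x^\beta$, so
\[
(T_\beta f - T_\alpha f)(x) = \int_{x^\alpha}^{x^\beta} f(y)\dd y.
\]
Hölder's inequality on the interval $[x^\alpha, x^\beta]$ immediately yields the pointwise estimate
\[
|(T_\alpha f - T_\beta f)(x)| \leq (x^\beta - x^\alpha)^{1/p'}\, \|f\|_p.
\]
Raising to the $q$-th power, integrating over $[0,1]$, and taking a $q$-th root shows that it suffices to prove the numerical estimate
\[
\int_0^1 (x^\beta - x^\alpha)^{q/p'}\dd x \;\leq\; (\alpha-\beta)^{q/p'}\, \Gamma\!\bigl(\tfrac{q}{p'}+1\bigr).
\]

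For this, write $\gamma := \alpha - \beta \geq 0$ and factor $x^\beta - x^\alpha = x^\beta(1 - x^\gamma)$. The key elementary inequality is $1 - x^\gamma \leq -\gamma \log x = \gamma |\log x|$ for $x\in(0,1]$, which follows from $e^u \geq 1+u$ applied with $u = \gamma\log x \leq 0$. Setting $s := q/p'$, this gives
\[
\int_0^1 (x^\beta - x^\alpha)^{s}\dd x
\;\leq\; \gamma^{s} \int_0^1 x^{\beta s}|\log x|^{s}\dd x
\;=\; \gamma^{s}\, \frac{\Gamma(s+1)}{(\beta s + 1)^{s+1}}
\;\leq\; \gamma^{s}\, \Gamma(s+1),
\]
using the standard identity $\int_0^1 x^a|\log x|^s\dd x = \Gamma(s+1)/(a+1)^{s+1}$ (obtained by the substitution $x=e^{-t}$), together with $\beta s + 1 \geq 1$. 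Taking a $q$-th root yields exactly the claimed bound.

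The main obstacle, if any, is choosing the right scalar estimate so that the $\Gamma$-factor appears cleanly: the naive bound $1 - x^\gamma \leq 1$ is far too weak, whereas the linearization $1 - x^\gamma \leq \gamma|\log x|$ both absorbs the expected $|\alpha-\beta|^{1/p'}$ factor and produces a standard Gamma integral whose prefactor $(\beta s + 1)^{-(s+1)}$ is conveniently $\leq 1$. Everything else reduces to Hölder and a routine change of variable.
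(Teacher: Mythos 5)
Your proof is correct and follows essentially the same route as the paper's: a pointwise H\"{o}lder bound reducing the problem to the scalar integral $\int_0^1 (x^\beta - x^\alpha)^{q/p'}\dd x$, then the estimate $x^\beta - x^\alpha \leq |\alpha-\beta|\,|\log x|$ (you get it from $e^u \geq 1+u$ after factoring, the paper from the mean value theorem with $x^\eta \leq 1$), and finally the standard Gamma integral. Your variant of retaining the factor $x^{\beta s}$ even yields the marginally sharper intermediate constant $\Gamma(s+1)/(\beta s+1)^{s+1}$ before discarding it, but this is a cosmetic refinement rather than a different argument.
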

\begin{proof}
Let $f\in L^p[0,1]$. H\"{o}lder's inequality gives
\[
|(T_\alpha f)(x) - (T_\beta f)(x)|
= \Big|\int_{x^\alpha}^{x^\beta} f(y) \dd y\Big|
\leq \|f\|_p |x^\alpha-x^\beta|^{1/p'}
\qquad \forall x\in [0,1],
\]
so that
\[
\|T_\alpha - T_\beta\|_{p,q}
\leq \Big[\int_0^1 |x^\alpha-x^\beta|^{q/p'}\dd x \Big]^{1/q}.
\]
By the mean value Theorem, there exists $\eta$ between $\alpha$ and $\beta$ and depending on $x$ such
that
\[
|x^\alpha - x^\beta| = |\alpha - \beta| x^\eta |\log x|,
\]
in particular
\[
|x^\alpha - x^\beta| \leq |\alpha - \beta| |\log x|
\qquad \forall x\in [0,1]
\]
so that
\begin{align*}
\|T_\alpha - T_\beta\|_{p,q}
\leq |\alpha-\beta|^{1/p'} \Big[\int_0^1 |\log x|^{q/p'}\dd x \Big]^{1/q}
= |\alpha-\beta|^{1/p'} \Gamma(\tfrac{q}{p'}+1)^{1/q}.
\end{align*}
\end{proof}
The following theorem provides lower/upper bounds for the norm and some hints about its behaviour when
$\alpha$ tends to $\infty$ and to $0$.
\begin{theorem}\label{th:2A}
Let $p,q \in (1,+\infty)$ and let $p'$ and $q'$ be the conjugated exponent of $p$ and $q$, respectively.
Then
\begin{equation}\label{eq:2A}
(\alpha q + 1)^{-1/q}
\leq
\|T_\alpha\|_{p,q}
\leq
\min\Big\{\big(\alpha\tfrac{q}{p'}+1\big)^{-1/q} , \big[\alpha B\big(\tfrac{p'}{q}+1,\alpha\big)\big]^{1/p'}\Big\},
\end{equation}
where $B(a,b):=\int_0^1 x^{a-1}(1-x)^{b-1} \dd x$, the Euler Beta function. In particular
\begin{alignat}{2}
\|T_\alpha\|_{p,q}       &\asymp \alpha^{-1/q}
&\qquad&\text{as}\
\alpha \to \infty,                              \label{eq:3A}\\
\|T_\alpha - T_0\|_{p,q} &\asymp \alpha^{1/p'}
&\qquad&\text{as}\
\alpha \to 0,                                   \label{eq:4A}\\
|\|T_\alpha\|_{p,q} - 1| &\asymp \alpha
&\qquad&\text{as}\
\alpha \to 0.                                   \label{eq:5A}
\end{alignat}
\end{theorem}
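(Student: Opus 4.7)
The plan is to first prove the three inequalities making up~\eqref{eq:2A}, and then harvest the asymptotics~\eqref{eq:3A}--\eqref{eq:5A} from them, using duality via~\eqref{eq:1A} to pass from the regime $\alpha\to\infty$ to the regime $\alpha\to 0$.

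For the lower bound in~\eqref{eq:2A} I would take the constant test function $f\equiv 1$: then $\|f\|_p=1$, $(T_\alpha f)(x)=x^\alpha$, and $\|T_\alpha f\|_q=(\alpha q+1)^{-1/q}$. For the first upper bound I would apply H\"older directly to the inner integral to get $|(T_\alpha f)(x)|\leq x^{\alpha/p'}\|f\|_p$, and then integrate over $x\in[0,1]$. The second upper bound is the delicate one: I would first swap the roles of $x$ and $y$ by writing
\[
(T_\alpha f)(x)=\int_0^1 f(y)\,\mathbf{1}_{\{x\geq y^{1/\alpha}\}}\dd y,
\]
then apply Minkowski's integral inequality in the $x$-variable to produce
\[
\|T_\alpha f\|_q \leq \int_0^1 |f(y)|\bigl(1-y^{1/\alpha}\bigr)^{1/q}\dd y,
\]
then H\"older with exponents $p,p'$, and finally the substitution $y=u^\alpha$ which turns the remaining integral into $\alpha B(\alpha,p'/q+1)=\alpha B(p'/q+1,\alpha)$.

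The asymptotic~\eqref{eq:3A} as $\alpha\to\infty$ follows immediately, since both the lower bound $(\alpha q+1)^{-1/q}$ and the first upper bound $(\alpha q/p'+1)^{-1/q}$ are $\asymp\alpha^{-1/q}$. The asymptotic~\eqref{eq:4A} as $\alpha\to 0$ receives its upper bound from Theorem~\ref{th:1A} with $\beta=0$; for the matching lower bound I would invoke~\eqref{eq:1A} to write $\|T_0-T_\alpha\|_{p,q}=\|T_{1/\alpha}\|_{q',p'}$, and then apply~\eqref{eq:3A} with exponents $(q',p')$ to the parameter $1/\alpha\to\infty$, producing $(1/\alpha)^{-1/p'}=\alpha^{1/p'}$. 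Finally for~\eqref{eq:5A}, I would Taylor-expand at $\alpha=0$: the lower bound in~\eqref{eq:2A} yields $\|T_\alpha\|_{p,q}\geq 1-\alpha+O(\alpha^2)$, while the first upper bound yields $\|T_\alpha\|_{p,q}\leq 1-\alpha/p'+O(\alpha^2)$, so that $\alpha/p'+O(\alpha^2)\leq 1-\|T_\alpha\|_{p,q}\leq \alpha+O(\alpha^2)$ for small $\alpha$.

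The main obstacle is the second upper bound: the naive approach (pointwise H\"older followed by integration) reproduces the first upper bound and cannot achieve a quantity that tends to $1$ as $\alpha\to 0$, which is what is needed to make~\eqref{eq:5A} non-vacuous and to relate $\|T_\alpha\|_{p,q}$ to $\|T_0\|_{p,q}=1$. One must recognize that integrating against $y$ before taking the $L^q$-norm in $x$, i.e. applying Minkowski's integral inequality in the correct order, is precisely what produces the Beta-function factor. Everything else reduces to elementary manipulations and the duality encoded in~\eqref{eq:1A}.
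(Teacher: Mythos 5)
Your proof is correct, and it follows the paper's argument almost verbatim in the lower bound (test function $f\equiv 1$), the first upper bound (pointwise H\"older on the inner integral), and all three asymptotics: \eqref{eq:3A} and \eqref{eq:5A} by comparing the lower bound with the first upper bound, and \eqref{eq:4A} via the duality \eqref{eq:1A} together with \eqref{eq:3A} applied to $T_{1/\alpha}\colon L^{q'}[0,1]\to L^{p'}[0,1]$ (the paper obtains the upper bound in \eqref{eq:4A} from this same duality rather than from Theorem~\ref{th:1A}, but that difference is immaterial). The one genuine divergence is the second upper bound: the paper passes to the adjoint, writing $\|T_\alpha\|_{p,q}=\|T^*_\alpha\|_{q',p'}$ and applying pointwise H\"older to $(T^*_\alpha f)(x)=\int_{x^{1/\alpha}}^1 f(u)\dd u$, which produces $\int_0^1(1-x^{1/\alpha})^{p'/q}\dd x=\alpha B\big(\tfrac{p'}{q}+1,\alpha\big)$; you instead apply Minkowski's integral inequality directly to $T_\alpha$ (integrating in $y$ before taking the $L^q$ norm in $x$) and then H\"older in $y$, arriving at the identical integral after the substitution $y=u^\alpha$. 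The two derivations are dual formulations of the same estimate --- Minkowski's integral inequality, unwound, is precisely H\"older applied to the adjoint --- so neither buys anything quantitative over the other, though your version avoids computing $T^*_\alpha$ explicitly, while the paper's version fits the $\alpha\leftrightarrow 1/\alpha$ duality theme that it exploits again in \eqref{eq:4A}. One remark in your closing paragraph is mistaken, albeit harmless to the proof: the Beta bound is not what makes \eqref{eq:5A} non-vacuous, since the first upper bound $(\alpha\tfrac{q}{p'}+1)^{-1/q}$ also tends to $1$ as $\alpha\to 0$, and indeed your own derivation of \eqref{eq:5A} (like the paper's) uses only the first upper bound; the actual role of the Beta bound is different, namely it is the sharper of the two upper bounds exactly when $q\leq p'$, as the paper records in Proposition~\ref{prop:1A}.
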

\noindent %
The result in~\eqref{eq:4A} refines the case $\beta = 0$ in Theorem~\ref{th:1A}, since now also the
lower bound is proved.
Moreover, it is interesting to notice that in the limit $\alpha\to 0$ the operator $T_\alpha$ tends to
$T_0$ as $\alpha^{1/p'}$ by~\eqref{eq:4A}, while $\|T_\alpha\|_{p,q}$ tends to $\|T_0\|_{p,q}=1$ as
$\alpha$ by~\eqref{eq:5A}, so that the norm converges far better than the operator.
\begin{proof}
The lower bound comes quickly by comparing $\|T_\alpha f\|_q$ and $\|f\|_p$ for $f=1$.\\
The first upper bound comes from H\"{o}lder's inequality: let $f\in L^p[0,1]$, then
\begin{align*}
\|T_{\alpha}f\|_q^q
&
 \leq \int_0^1\Big(\int_0^{x^{\alpha}}|f(y)|dy\Big)^q\dd x\\
&\leq \int_0^1\Big[\Big(\int_0^{x^{\alpha}}|f(y)|^p \dd y\Big)^{1/p}\Big(\int_0^{x^{\alpha}}1^{p'} \dd y\Big)^{1/p'}\Big]^q\dd x\\
&\leq \|f\|_p^q \int_0^1 x^{\alpha q/p'}\dd x
=     \|f\|_p^q \big(\alpha\tfrac{q}{p'}+1\big)^{-1}.
\end{align*}
Also the second upper bound comes from H\"{o}lder's inequality, but used for the operator $T^*_\alpha\colon$
$L^{q'}[0,1] \to L^{p'}[0,1]$, via the equality $\|T_{\alpha}\|_{p,q} = \|T_{\alpha}^*\|_{q',p'}$.
In fact, let $f\in L^{q'}[0,1]$. Then
\begin{align*}
\|T_{\alpha}^* f\|_{p'}^{p'}
&\leq \int_0^1\Big(\int_{x^{1/\alpha}}^1 |f(y)|\dd y \Big)^{p'}\dd x
 \leq \int_0^1\Big[\Big(\int_{x^{1/\alpha}}^1|f(y)|^{q'}\Big)^{1/q'}\Big(\int_{x^{1/\alpha}}^1 1^{q}\Big)^{1/q}\Big]^{p'}\dd x\\
&\leq \|f\|_{q'}^{p'}\int_0^1(1-x^{1/\alpha})^{p'/q}\dd x
 =    \|f\|_{q'}^{p'}\alpha\int_0^1 (1-z)^{p'/q}z^{\alpha-1}\dd z\\
&=    \|f\|_{q'}^{p'}\alpha B\big(\tfrac{p'}{q}+1,\alpha\big).
\end{align*}
The comparison of the lower bound and the first upper bound in~\eqref{eq:2A} gives~\eqref{eq:3A}
and~\eqref{eq:5A}.\\
To prove~\eqref{eq:4A} we take advantage of the relation~\eqref{eq:1A}, so that
\begin{align*}
\|T_\alpha-T_0\|_{p,q}
= \|(T_\alpha-T_0)^*\|_{q',p'}
= \|T_{1/\alpha}\|_{q',p'}
\end{align*}
and the claim follows from~\eqref{eq:3A}.
\end{proof}
\noindent %
It is immediate to verify that the two upper bounds in Theorem~\ref{th:2A} coincide when $q=p'$, for
every $\alpha$. The following proposition shows that this is the unique case where this happens, and
describes explicitly when one of the two upper bounds is more convenient than the other.
\begin{proposition}\label{prop:1A}
Let $p,q \in (1,+\infty)$ and let $p'$ be the conjugate exponent of $p$. Then
\[
\begin{array}{ll}
\big[\alpha B\big(\alpha, 1+\tfrac{p'}{q}\big)\big]^{1/p'} \geq \big(\alpha\tfrac{q}{p'}+1\big)^{-1/q} & \text{ if } q \geq p',\\
\big[\alpha B\big(\alpha, 1+\tfrac{p'}{q}\big)\big]^{1/p'} \leq \big(\alpha\tfrac{q}{p'}+1\big)^{-1/q} & \text{ if } q \leq p',
\end{array}
\]
with equality if and only if $q=p'$.
\end{proposition}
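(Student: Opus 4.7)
Setting $a := p'/q$, the two upper bounds from Theorem~\ref{th:2A} become $(1+\alpha/a)^{-1/q}$ and $[\alpha B(\alpha,1+a)]^{1/p'}$, and raising both sides to the positive power $p'q$ reduces the claim to showing that
\[
F(a) := \alpha B(\alpha, 1+a)\bigl(1 + \tfrac{\alpha}{a}\bigr)^a
\]
satisfies $F(a) \gtreqless 1$ according as $a \lesseqgtr 1$, with equality iff $a=1$. First I would check directly that $F(1) = 1$ (via $B(\alpha,2) = 1/[\alpha(\alpha+1)]$) and $F(0^+) = 1$ (via $\alpha B(\alpha,1) = 1$ together with $a \log(\alpha/a) \to 0$); these two boundary values will anchor the later analysis.

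The heart of the plan is to obtain a pointwise-in-$t$ integral representation for $f(a) := \log F(a)$. Differentiating gives
\[
f'(s) = \psi(s+1) - \psi(s+\alpha+1) + \log\tfrac{s+\alpha}{s} - \tfrac{\alpha}{s+\alpha},
\]
and combining Frullani for the logarithm with the Binet integral for $\psi$ collapses all four terms into
\[
f'(s) = \int_0^\infty (1 - e^{-\alpha t})\, e^{-st}\,\bigl(\phi(t) - s\bigr)\, dt,
\]
where $\phi(t) := 1/t - 1/(e^t-1)$ is the classical positive, strictly decreasing function on $(0,\infty)$ with $\phi(0^+) = 1/2$ and $\phi(\infty) = 0$. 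Integrating from $0$ to $a$, using $f(0^+) = 0$ and the identity $\phi(t) - 1/t = -1/(e^t-1)$ to simplify, I would arrive at the clean representation
\[
f(a) = \int_0^\infty \frac{1-e^{-\alpha t}}{t}\left[a\, e^{-at} - \frac{1 - e^{-at}}{e^t - 1}\right] dt.
\]

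Because the outer weight $(1-e^{-\alpha t})/t$ is strictly positive on $(0,\infty)$, the sign of $f(a)$ is determined pointwise by the bracket $h(a,t) := a e^{-at} - (1-e^{-at})/(e^t-1)$. Direct inspection gives $h(0,t) = 0$ and $h(1,t) = 0$ (the second uses the identity $(1-e^{-t})/(e^t-1) = e^{-t}$), while $\partial_a h(a,t) = t e^{-at}(\phi(t) - a)$ shows that $h(\cdot, t)$ has a unique maximum at $a = \phi(t) \in (0, 1/2]$. Consequently $h(\cdot,t) > 0$ on $(0,1)$ and $h(\cdot,t) < 0$ on $(1,\infty)$ for every $t > 0$, and integrating against the positive measure yields $f(a) > 0$ for $a \in (0,1)$ and $f(a) < 0$ for $a > 1$, which is the desired comparison of the two upper bounds, with equality iff $a = 1$, i.e., $q = p'$.

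In my view the main obstacle is noticing that one should attack $f(a)$ directly rather than $f'(a)$: the bound $a - \phi(t) \geq 0$ one reads off from $f'$ is pointwise only for $a \geq 1/2$, and thus settles the regime $a \geq 1$ but not $a \leq 1/2$. The extra $s$-integration is precisely what smooths the sign-indefinite integrand of $f'$ into the sign-definite integrand for $f$, making both regimes $a \leq 1$ and $a \geq 1$ amenable to a single pointwise argument.
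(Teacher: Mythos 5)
Your proposal is correct, and it takes a genuinely different route from the paper's. Both arguments study the same function $f(a)=\log\big[\alpha B(\alpha,1+a)\big]+a\log\big(1+\tfrac{\alpha}{a}\big)$ (the paper's $F(\alpha,u)$ with $u=a$), but the paper anchors at $\alpha=0$, where $F(0,u)=0$, and differentiates in $\alpha$: via the Weierstrass product it writes $\partial_\alpha F=\tfrac{u}{u+\alpha}-\sum_{n\geq 1}\tfrac{u}{(n+\alpha)(n+u+\alpha)}$, telescopes $\tfrac{u}{u+\alpha}=\sum_{n\geq 1}\tfrac{u}{(n+u+\alpha-1)(n+u+\alpha)}$, and compares the two series term by term, so the derivative is pointwise sign-definite and no second integration is needed. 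You differentiate in the other variable, where (as you rightly note) $f'$ is \emph{not} sign-definite, and restore sign-definiteness by integrating back: I checked that the four terms of $f'(s)$ do collapse to $\int_0^\infty(1-e^{-\alpha t})e^{-st}(\phi(t)-s)\,\mathrm{d}t$ (the $-\alpha/(s+\alpha)$ term is exactly $-\int_0^\infty s e^{-st}(1-e^{-\alpha t})\,\mathrm{d}t$), that $h(1,t)=0$ and $\partial_a h=te^{-at}(\phi(t)-a)$, and that strictness at each $t>0$ gives the equality case $q=p'$. Two polish items: justify the interchange of $\int_0^a\mathrm{d}s$ with $\int_0^\infty\mathrm{d}t$ (routine, since $\int_0^a\!\int_0^\infty(1-e^{-\alpha t})e^{-st}(s+\phi(t))\,\mathrm{d}t\,\mathrm{d}s<\infty$, and $f'(s)\sim\log(1/s)$ is integrable at $0^+$), and note that only $0<\phi(t)<1$ is needed, not strict monotonicity of $\phi$; also the representation you use for $\psi$ is Gauss's formula rather than Binet's. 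Comparatively, the paper's proof is more elementary (a term-by-term series comparison) and yields monotonicity in $\alpha$ of the logarithmic gap as a by-product, while yours needs the classical integral representations and a Fubini step but produces a closed-form kernel that makes the crossover at $a=1$, and hence the equality case, completely transparent.
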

\begin{proof}
In terms of $u := p'/q$ the problem is equivalent to studying the sign of the function
\begin{align*}
F(\alpha,u)
:&= \log\big[\alpha B(\alpha, 1+u)\big] + u\log\Big(1+\frac{\alpha}{u}\Big)\\
 &= \log\Gamma(\alpha+1)+\log\Gamma(1+u) - \log\Gamma(1+u+\alpha) + u\log\Big(1+\frac{\alpha}{u}\Big),
\end{align*}
where the last equality comes from the representation of the Beta function as product of gammas
(see~\cite[Th.~1.1.4]{specialFunctions}).
Since $F(0,u)=0$ for every $u>0$, it is sufficient to prove that $\partial_\alpha F(\alpha,u)$ is
positive when $u \in(0,1)$ and negative for $u > 1$ (proviso that $\alpha > 0$). Let $\psi$ be the
logarithmic derivative of the gamma function, then
\[
\partial_\alpha F(\alpha,u)
= \psi(1+\alpha) - \psi(1+u+\alpha) + \frac{u}{u+\alpha}
= \frac{u}{u+\alpha}
  -\sum_{n=1}^{\infty}\frac{u}{(n+\alpha)(n+u+\alpha)},
\]
where the series comes from the representation of gamma as Weierstrass product
(see~\cite[Th.~1.2.5]{specialFunctions}). Since
\[
\frac{u}{u+\alpha}
= u\sum_{n=1}^\infty \int_{u+n-1}^{u+n}\frac{\dd x}{(x+\alpha)^2}
=  \sum_{n=1}^\infty \frac{u}{(n + u + \alpha - 1)(n + u + \alpha)},
\]
to conclude it is sufficient to see that for every $n$
\begin{align*}
\frac{u}{(n + u + \alpha - 1)(n + u + \alpha)}
\geq \frac{u}{(n + \alpha)(n + u + \alpha)}
\end{align*}
if and only if $u < 1$.
\end{proof}
Howard and Shep~\cite{HowardSchep} proposed general theorems to estimate the norms of positive operators
in $L^p$ spaces and used them to compute the exact value of the norm of Volterra operator $V = T_1$ when
it is considered as a map $L^p[0,1]\to L^q[0,1]$ for every pair of indexes $p$, $q$. Our attempts to
estimate the norms of $T_\alpha$ via these results produced values which are larger than what we have
stated in Theorem~\ref{th:2A} and that we have proved using only basic tools. This is due to the fact
that the results in~\cite{HowardSchep} depend on a convenient choice of a test function, for which we
have not been able to find a good analogue for the general $T_\alpha$ operator. Also the strategy
allowing to compute $\|V\|_{p,q}$ fails for $T_\alpha$ operators, since the equation which should be
solved explicitly to detect the best test function becomes a very complicated integro-differential
equation in case $\alpha \neq 1$ or when $p$ and $q$ are not $2$: for the case $p=q=2$, however, we can
compute the norm of $T_\alpha$ as a byproduct of the study of $T_\alpha^* T_\alpha^\ph$ in
Section~\ref{sec:5A}, see Corollary~\ref{cor:1A}.

\section{Spectrum}\label{sec:3A}
When $q=p$ one can consider the spectrum of $T_\alpha\colon L^p[0,1] \to L^p[0,1]$. Barnes~\cite{Barnes}
investigated a general family of Volterra operators and showed that their spectrum is $\{0\}$, so that
they have at most $0$ as eigenvalue. The operators $T_{\alpha}$ with $\alpha\geq 1$ belong to this family
but the results in~\cite{Barnes} do not cover the case $\alpha < 1$. In fact, we prove the following
result.
\begin{theorem}\label{th:3A}
Fix $p\in(1,+\infty)$. Suppose that $\alpha \geq 1$, then
\[
\sigma(T_\alpha)=\{0\},
\qquad
\sigma_0(T_\alpha)=\emptyset.
\]
Suppose that $\alpha < 1$, then
\[
\sigma(T_\alpha) = \sigma_0(T_\alpha)\cup\{0\},
\qquad
\sigma_0(T_\alpha) = \{\alpha^n(1-\alpha)\colon n\in\N\}
\]
and the eigenspace associated with $\alpha^n(1-\alpha)$ is generated by $x^{\frac{\alpha}{1-\alpha}}
P_{n,\alpha}(\log x)$ where $P_{n,\alpha}$ is a suitable polynomial with degree $n$ and depending on
$\alpha$. %
The span of the family $\{f_n\}_{n\in\N}$ is dense in $L^p[0,1]$.
\end{theorem}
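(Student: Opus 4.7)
Since $T_\alpha$ is compact and non-invertible, the Riesz--Schauder theorem gives $\sigma(T_\alpha)=\{0\}\cup\sigma_0(T_\alpha)$, so the spectral statements reduce to identifying $\sigma_0$ and proving density of the eigenfunctions. For $\alpha\geq 1$, the inequality $x^\alpha\leq x$ on $[0,1]$ supports a Gronwall argument: if $T_\alpha f=\lambda f$ with $\lambda\neq 0$ and $g(x):=\int_0^x|f|$, then $|\lambda||f(x)|\leq g(x^\alpha)\leq g(x)$, so $g'\leq g/|\lambda|$ with $g(0)=0$, forcing $g\equiv 0$ and hence $\sigma_0(T_\alpha)=\emptyset$.

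For $\alpha<1$ set $s:=\alpha/(1-\alpha)$, the exponent characterised by $\alpha(s+1)=s$. A direct computation based on $\int_0^1 t^s(\log t)^k\dd t=(-1)^k k!/(s+1)^{k+1}$ shows that $T_\alpha$ preserves the finite-dimensional subspace $\mathcal V_n:=\{x^s P(\log x):\deg P\leq n\}$ and that, in the basis $\{x^s(\log x)^k\}_{k=0}^n$, it acts as an upper-triangular matrix with simple diagonal $\alpha^k(1-\alpha)$, $k=0,\ldots,n$. For each $n\in\N$, back-substitution therefore yields a one-dimensional eigenspace at $\alpha^n(1-\alpha)$ inside $\mathcal V_n$, spanned by $f_n(x)=x^s P_{n,\alpha}(\log x)$ with $\deg P_{n,\alpha}=n$.

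The main obstacle is to rule out every other nonzero eigenvalue. If $T_\alpha f=\lambda f$ and $F(x):=\int_0^x f$, then $F$ solves the pantograph-type problem $\lambda F'(x)=F(x^\alpha)$ with $F(0)=0$. Substituting $F(x)=x^{s+1}\phi(\log x)$ gives
\[
\phi'(u)+(s+1)\phi(u)=\lambda^{-1}\phi(\alpha u)\qquad\text{on } (-\infty,0],
\]
and $F(0)=0$ forces $\phi(u)=o(e^{(s+1)|u|})$ as $u\to-\infty$. Expanding $\phi(u)=\sum_k c_k u^k$ yields the recursion
\[
(k+1)c_{k+1}=c_k\bigl[\lambda^{-1}\alpha^k-(s+1)\bigr],
\]
which terminates iff $\lambda=\alpha^n(1-\alpha)$ for some $n\in\N$, producing a polynomial $\phi$. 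For any other $\lambda$, telescoping the ratios gives $c_k=c_0(-1)^k(s+1)^k\pi_k/k!$ where $\pi_k:=\prod_{j=0}^{k-1}[1-\lambda^{-1}\alpha^j/(s+1)]$ converges to $D(\lambda):=\prod_{j\geq 0}[1-\lambda^{-1}\alpha^j/(s+1)]\neq 0$ at geometric rate $O(\alpha^k)$. Summing then gives $\phi(u)=c_0 D(\lambda)\,e^{-(s+1)u}+O(e^{(s+1)\alpha|u|})$ as $u\to-\infty$, contradicting the growth bound unless $c_0=0$ (i.e., $\phi\equiv 0$).

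Finally, for density of $\{f_n\}$, observe that its span coincides with the span of $\{x^s(\log x)^k:k\geq 0\}$. Any annihilator $h\in L^{p'}[0,1]$ satisfies $H^{(k)}(s)=0$ for all $k\geq 0$, where $H(t):=\int_0^1 h(x)x^t\dd x$ is holomorphic on $\{\operatorname{Re}t>-1/p\}$ by H\"{o}lder's inequality. Analytic continuation forces $H\equiv 0$ on this half-plane, and injectivity of the Mellin transform yields $h=0$.
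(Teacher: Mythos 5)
Your proposal has one genuine gap, and it sits exactly where the paper invests most of its technical effort. In the case $\alpha<1$ you pass from the pantograph equation $\phi'(u)+(s+1)\phi(u)=\lambda^{-1}\phi(\alpha u)$ to ``expanding $\phi(u)=\sum_k c_k u^k$'' with no justification. A priori $\phi(u)=e^{-(s+1)u}F(e^u)$ is only $C^1$; bootstrapping through the equation (legitimate, since $\alpha u\in(u,0)$ for $u<0$) gives $\phi\in C^\infty$, but not analyticity --- and your endgame needs much more than analyticity at $0$: you compare the asymptotics of the power series as $u\to-\infty$ against the growth bound $\phi(u)=o(e^{(s+1)|u|})$, so you need the Taylor series at $0$ to converge on the \emph{whole} half-line and to represent $\phi$ there. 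Smooth non-analytic solutions are not excluded for free; this is precisely what the paper proves via the bound $\|h^{(k)}\|_{\infty,R}\leq c(\alpha)\,C^k\|h\|_{\infty,R}$, obtained from the recursion for the coefficients $q_{k,j}$ and the identity $\sum_{j=0}^k \alpha^{\binom{j}{2}}\Gbinom{k}{j}_\alpha t^j=\prod_{j=0}^{k-1}(1+\alpha^j t)$, which forces every eigen-$h$ to be entire; only then does the coefficient recursion classify solutions. A workable substitute within your framework would be a Picard-type uniqueness argument: the equation is of Volterra type toward $0$ (all evaluation points $t,\alpha t$ lie in $[u,0]$), so the solution on each $[-R,0]$ is uniquely determined by $\phi(0)$, and since your everywhere-convergent series is \emph{a} solution it is \emph{the} solution. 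Until this is filled in, both the exclusion of eigenvalues outside $\{\alpha^n(1-\alpha)\}$ and the one-dimensionality of the eigenspaces are unproved --- note that your triangularity argument gives one-dimensionality only inside $\mathcal{V}_n$, not in $L^p[0,1]$. A small separate omission: your Gronwall argument rules out only $\lambda\neq0$, so you should add injectivity ($T_\alpha f=0$ gives $\int_0^t f=0$ for all $t\in[0,1]$, hence $f=0$) to conclude $\sigma_0(T_\alpha)=\emptyset$ for $\alpha\geq1$.

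The rest of your route is sound and diverges from the paper in instructive ways. For $\alpha\geq1$ your Gronwall argument from $x^\alpha\leq x$ is more elementary than the paper's kernel bound $\|T_\alpha^n\|_{p,p}\leq b_n$ combined with Gelfand's formula (though the paper's iterate bounds get reused for Theorem~\ref{th:4A}, so nothing is wasted there). The triangular action of $T_\alpha$ on $\mathcal{V}_n$, with diagonal $\alpha^k(1-\alpha)$ thanks to $\alpha(s+1)=s$, constructs the eigenfunctions cleanly where the paper reads them off the series~\eqref{eq:18A}. Your growth-rate dichotomy via $\pi_k\to D(\lambda)$ at rate $O(\alpha^k)$ is a correct alternative to the paper's use of the boundary condition~\eqref{eq:14A} and the $q$-series product identity, and it localizes the spectral condition in the asymptotics of $\phi$ rather than in an integral equation. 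Finally, your Hahn--Banach/Mellin density argument is correct and essentially self-contained, replacing the paper's substitution $x=e^w$ and appeal to weighted polynomial density on $(-\infty,0]$.
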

\noindent %
The proof of this theorem also provides a formula for $P_{n,\alpha}$, see~\eqref{eq:20A}.\\
Note that $T_\alpha$ is not normal, so that the density of the span of the eigenfunctions is not
sufficient to prove that they form a Schauder basis for the space; the known conditions that are capable
of guaranteeing this property do not appear to be practically verifiable in the present case
(see~\cite[Proposition~1.a.3]{LindenstraussTzafriri}, and~\cite[Chapter~VI]{GohbergKrein}).
\medskip

For the proof of this theorem we need a preliminary study of the properties of the iterations of
$T_\alpha$. This is possible since for every $f\in L^p[0,1]$, $T_\alpha f$ can be written as
$\int_{[0,1]} \chi_{[0,x^\alpha]}(y) f(y) \dd y$, so that the iterations of $T_\alpha$ are
\[
(T_\alpha^n f)(x)
=
\int_{[0,1]} K_n(x,y) f(y) \dd y
\qquad
\forall n\geq 1,
\]
with kernels $K_n(x,y)$ satisfying the recursive formula:
\[
 K_1(x,y)= \chi_{[0,x^\alpha]}(y),
 \qquad
 K_{n+1}(x,y) = \int_{[0,1]} K(x,s)K_n(s,y) \dd s
\quad
\forall n\geq 1.
\]
The following proposition gives a convenient formula for $K_n$.
\begin{proposition}\label{prop:2A}
Assume $\alpha>0$. For every $n\in\N$ let $a_n$ and $b_n$ be defined as
\[
a_n := \frac{\alpha-\alpha^n}{1-\alpha}
\quad \text{and}\quad
b_1 := 1,
\quad
b_n:= \prod_{k=1}^{n-1}\frac{1-\alpha}{1-\alpha^k}
\quad
\text{when $n\geq 2$};
\]
when $\alpha=1$ these formulas have to be considered as limits, giving $a_n=n-1$ and $b_{n} = 1/(n-1)!$
in that case. Then
\[
K_n(x,y) = b_n \chi_{[0,x^{\alpha^n}]}(y) x^{a_n} g_n(x^{-\alpha^n}y),
\]
where
\[
g_1(z) := 1,
\quad\text{and}\quad
g_{n+1}(z) := (a_n+1)\int_{z^{1/\alpha^n}}^1 w^{a_n} g_n(w^{-\alpha^n}z)\dd w
\quad \forall n\geq 1.
\]
\end{proposition}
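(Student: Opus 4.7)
The plan is to prove the formula by induction on $n$, using the recursive relation $K_{n+1}(x,y) = \int_0^1 K_1(x,s)K_n(s,y)\dd s = \int_0^{x^\alpha} K_n(s,y)\dd s$.

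The base case $n=1$ is immediate since $a_1=0$, $b_1=1$, $g_1\equiv 1$, and the formula reduces to $\chi_{[0,x^\alpha]}(y)$. For the inductive step, I would insert the inductive hypothesis into the recursion to obtain
\[
K_{n+1}(x,y) = b_n\int_0^{x^\alpha} \chi_{[0,s^{\alpha^n}]}(y)\, s^{a_n} g_n(s^{-\alpha^n}y)\dd s.
\]
The indicator constraints $y\leq s^{\alpha^n}$ and $s\leq x^\alpha$ are jointly satisfiable iff $y\leq x^{\alpha^{n+1}}$, which yields the outer factor $\chi_{[0,x^{\alpha^{n+1}}]}(y)$, with $s$ ranging over $[y^{1/\alpha^n},\,x^\alpha]$. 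The key technical move is the change of variable $s=x^\alpha w$: under this substitution the lower endpoint becomes $w=z^{1/\alpha^n}$ where $z:=x^{-\alpha^{n+1}}y$, and the argument of $g_n$ becomes $z w^{-\alpha^n}$, so that $x$ factors out entirely as an explicit power.

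After the substitution one reads off
\[
K_{n+1}(x,y) = b_n\, \chi_{[0,x^{\alpha^{n+1}}]}(y)\, x^{\alpha(a_n+1)} \int_{z^{1/\alpha^n}}^1 w^{a_n} g_n(z w^{-\alpha^n})\dd w,
\]
and to finish the induction one checks two algebraic identities for the coefficients: $a_{n+1}=\alpha(a_n+1)$, which is immediate from $a_n=(\alpha-\alpha^n)/(1-\alpha)$ and telescopes correctly, and $b_{n+1}(a_n+1) = b_n$, which follows since $a_n+1 = (1-\alpha^n)/(1-\alpha)$ and the product defining $b_{n+1}$ extends $b_n$ by the factor $(1-\alpha)/(1-\alpha^n)$. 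Comparing with the definition of $g_{n+1}$ then gives exactly the asserted expression for $K_{n+1}$.

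The $\alpha=1$ case is handled by continuity: both $a_n$ and $b_n$ have removable singularities at $\alpha=1$, with $(\alpha-\alpha^n)/(1-\alpha)\to n-1$ and $(1-\alpha)/(1-\alpha^k)=1/(1+\alpha+\cdots+\alpha^{k-1})\to 1/k$, so that the product giving $b_n$ tends to $1/(n-1)!$. Since both sides of the asserted formula depend continuously on $\alpha>0$, the identity for $\alpha\neq 1$ extends to $\alpha=1$. I expect no conceptual obstacle here; the only delicate point is keeping the change of variable bookkeeping (endpoints, the argument of $g_n$, and the Jacobian) consistent so that the structural form $b_n\,x^{a_n} g_n(x^{-\alpha^n}y)$ is preserved across the recursion.
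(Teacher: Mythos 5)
Your proof is correct and follows essentially the same route as the paper's: induction via the kernel recursion, the same indicator analysis giving $s\in[y^{1/\alpha^n},x^\alpha]$, the same substitution $s=x^\alpha w$, and the same coefficient identities $a_{n+1}=\alpha(a_n+1)$ and $b_{n+1}(a_n+1)=b_n$. The only cosmetic difference is your continuity argument for $\alpha=1$, which is unnecessary: the induction uses only those two recursions, which hold verbatim for $a_n=n-1$, $b_n=1/(n-1)!$, so the same proof covers $\alpha=1$ directly.
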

For example,
\begin{gather*}
g_2(z)  = 1 - z^{1/\alpha},
\qquad
g_3(z)  = 1 - (\alpha{+}1) z^{1/\alpha}
            + \alpha       z^{1/\alpha + 1/\alpha^2},\\
g_4(z)  = 1 - (\alpha^2{+}\alpha{+}1)        z^{1/\alpha}
            + (\alpha^3{+}\alpha^2{+}\alpha) z^{1/\alpha + 1/\alpha^2}
            - \alpha^3                       z^{1/\alpha + 1/\alpha^2 + 1/\alpha^3}.
\end{gather*}
Functions $\{g_n\}_n$ satisfy also the relation
\begin{equation}\label{eq:6A}
g_{n+1}(z) = g_n(z) - \alpha^{n-1}z^{1/\alpha}g_n(z^{1/\alpha})
\end{equation}
for every $n$, which comes from the relation
\begin{equation}\label{eq:7A}
K_{n+1}(x,y)
= \frac{1}{a_n+1}[x^\alpha K_n(x^\alpha,y) - \alpha^{n-1}y^{1/\alpha}K_n(x,y^{1/\alpha})].
\end{equation}
The equality in~\eqref{eq:6A} can be used to prove the following explicit formulas for $g_n$ and $K_n$,
again valid for every $n$:
\begin{align}
g_n(z)   &= \sum_{k=0}^{n-1} (-1)^k \Gbinom{n{-}1}{k}_\alpha \alpha^{\binom{k}{2}} z^{\frac{1-\alpha^{-k}}{\alpha-1}},
                                                                                                     \label{eq:8A}\\
K_n(x,y) &= b_n \chi_{[0,x^{\alpha^n}]}(y)
            \sum_{k=0}^{n-1} (-1)^k \Gbinom{n{-}1}{k}_\alpha \alpha^{\binom{k}{2}}
                             x^{\frac{\alpha^{n-k}-\alpha}{\alpha-1}} y^{\frac{1-\alpha^{-k}}{\alpha-1}},
                                                                                                     \label{eq:9A}
\end{align}
where $\Gbinom{m}{k}_\alpha := \frac{(1-\alpha^{m})(1-\alpha^{m-1})\cdots(1-\alpha^{m-k+1})}
{(1-\alpha)(1-\alpha^2)\cdots(1-\alpha^k)}$. This is the so called $\alpha$-analogue of the binomial
coefficient and in spite of its definition it is a polynomial with integral and positive coefficients so
that in particular it is positive when $\alpha >0$ (see~\cite[Ch.~10]{specialFunctions}).
Formulas~\eqref{eq:8A}-\eqref{eq:9A} generalize to every $\alpha$ the binomial presentations of
identities $g_n(z) = (1-z)^{n-1}$ and $K_n(x,y) = \frac{\chi_{[0,x]}(y)}{(n-1)!} (x-y)^{n-1}$ for
$\alpha=1$. They are useful in case one wants to graph $g_n$ and $K_n$ for some $n$, but the alternating
signs appearing there make them not useful to produce lower/upper bounds, which however is our main
interest. For this reason we do not prove relations~\eqref{eq:6A}--\eqref{eq:9A} here.
\begin{proof}[Proof of Proposition~\ref{prop:2A}]
The claim is evident for $n=1$. By inductive hypothesis
\begin{align*}
K_{n+1}(x,y)
&= \int_0^1 K_1(x,s)K_n(s,y) \dd s
 = \int_0^1 \chi_{[0,x^\alpha]}(s) \chi_{[0,s^{\alpha^n}]}(y) b_n s^{a_n} g_n(s^{-\alpha^n}y) \dd s.
\intertext{The product $\chi_{[0,x^\alpha]}(s) \chi_{[0,s^{\alpha^n}]}(y)$ is $1$ if and only if $y\leq
x^{\alpha^{n+1}}$ and $s \in [y^{1/\alpha^n},x^\alpha]$, thus}
K_{n+1}(x,y)
&= b_n \chi_{[0,x^{\alpha^{n+1}}]}(y)\int_{y^{1/\alpha^n}}^{x^\alpha} s^{a_n} g_n(s^{-\alpha^n}y) \dd s.
\intertext{Setting $s=x^\alpha w$ this is }
K_{n+1}(x,y)
&= b_n \chi_{[0,x^{\alpha^{n+1}}]}(y)x^{\alpha (a_n+1)}\int_{x^{-\alpha}y^{1/\alpha^n}}^{1} w^{a_n} g_n(w^{-\alpha^n} x^{-\alpha^{n+1}} y)\dd w,
\end{align*}
which is the claim since $a_{n+1}=\alpha(a_n+1)$ and $b_n = b_{n+1}(a_n+1)$.
\end{proof}
Now we can prove Theorem~\ref{th:3A}.
\begin{proof}[Proof of Theorem~\ref{th:3A}]
From the recursive formula for $g_n$ it is evident that $g_n(z)\in[0,1]$ for every $z\in[0,1]$ and every
$n$, so that
\[
0
\leq K_n(x,y)
\leq b_n \cdot \chi_{[0,x^{\alpha^n}]}(y)\cdot x^{a_n}.
\]
Thus,
\begin{equation}\label{eq:10A}
\|T_\alpha^n\|_{p,p}\leq \|K_n\|_{L^p([0,1]\times[0,1])}
\leq b_n (p a_n + \alpha^n + 1)^{-1/p}
\leq b_n
\end{equation}
and Gelfand's formula for the spectral radius $\rho(T_\alpha)$ produces the bound
\begin{equation}\label{eq:11A}
\rho(T_\alpha)
= \lim_{n\to\infty} \|T_\alpha^n\|_{p,p}^{1/n}
\leq \lim_{n\to\infty} b_n^{1/n}
= \begin{cases}
    1- \alpha   & \text{if } \alpha \leq 1,\\
    0           & \text{if } \alpha \geq 1.
  \end{cases}
\end{equation}
The inclusion $L^p[0,1]\subset L^1[0,1]$ assures that $T_\alpha f$ is absolutely continuous in $[0,1]$
and $\CC^\infty$ in $(0,1)$, so that $T_\alpha$ cannot be surjective. However it is injective, by the
Lebesgue differentiability Theorem. Thus, the spectrum $\sigma$ and the point spectrum $\sigma_0$ in case
$\alpha\geq 1$ are
\[
\sigma(T_\alpha)=\{0\},
\qquad
\sigma_0(T_\alpha)=\emptyset,
\]
respectively, and the problem is completely settled in this case.
\medskip\\
Assume now that $\alpha\in(0,1)$. One verifies that $x^{\frac{\alpha}{1-\alpha}}\in L^p[0,1]$ is an
eigenfunction for $T_\alpha$, with eigenvalue $1-\alpha$. With~\eqref{eq:11A} this proves that the
spectral radius $\rho(T_\alpha)$ equals $1-\alpha$, and that $\sigma_0(T_\alpha)$ contains $1-\alpha$.
The special form of this eigenfunction for the eigenvalue $1-\alpha$ suggests to write the generic
eigenfunction $f(x)$ as $x^{\frac{\alpha}{1-\alpha}} h(\log x)$. The regularity of $f$ shows that $h$ is
in $\CC^\infty(-\infty,0)$ and admits a continuous extension to $0$ from the left. In terms of $h$ the
equation $T_\alpha f = \lambda f$ becomes
\begin{equation}\label{eq:12A}
\int_{-\infty}^{\alpha y}e^{\frac{w}{1-\alpha}} h(w)\dd w = \lambda e^{\frac{\alpha y}{1-\alpha}} h(y).
\end{equation}
A derivation with respect to $y$ of this identity  and a rearrangement of the terms produce the equation
\begin{equation}\label{eq:13A}
h'(y) = A h(\alpha y) - B h(y)
\qquad
\text{with $A:=\frac{\alpha}{\lambda}$ and $B:= \frac{\alpha}{1-\alpha}$},
\end{equation}
and the evaluation of the equation at $y=0$ produces the condition
\begin{equation}\label{eq:14A}
\int_{-\infty}^{0}e^{\frac{w}{1-\alpha}} h(w)\dd w = \lambda h(0).
\end{equation}
\noindent %
On the contrary, every function $h$ in $\CC^\infty(-\infty,0)$ admitting a finite limit to $0^\sm$ and
satisfying both~\eqref{eq:13A} and~\eqref{eq:14A} also satisfies~\eqref{eq:12A} and hence produces an
eigenfunction.\\
By induction on the order $k$, one proves that there exist constants $q_{k,j}$ with $j\in\Z$ (depending
on parameters $A$ and $B$) with $q_{k,j}=0$ when $j<0$ or $j>k$, such that
\begin{equation}\label{eq:15A}
(-1)^k h^{(k)}(y) = \sum_{j=0}^k (-1)^j q_{k,j} h(\alpha^j y).
\end{equation}
In fact, the formula holds for $k=1$ with $q_{1,0} := B$ and $q_{1,1} := A$. Assume that the formula
holds for $k$. Deriving the formula and plugging~\eqref{eq:13A} into the resulting identity we see that
\begin{align*}
(-1)^{k+1} h^{(k+1)}(y)
&=-\sum_{j=0}^k (-1)^j q_{k,j} \alpha^j h'(\alpha^j y)  \\
&=-\sum_{j=0}^k (-1)^j q_{k,j} \alpha^j \big(A h(\alpha^{j+1} y) - B h(\alpha^j y)\big)\\
&=-\sum_{j=0}^k (-1)^j A q_{k,j} \alpha^j h(\alpha^{j+1} y)
  +\sum_{j=0}^k (-1)^j B q_{k,j} \alpha^j h(\alpha^j y)\\
&= \sum_{j=1}^{k+1} (-1)^j A q_{k,j-1} \alpha^{j-1} h(\alpha^j y)
  +\sum_{j=0}^k (-1)^j B q_{k,j} \alpha^j h(\alpha^j y)
\end{align*}
so that the formula for $k+1$ emerges once we define
\[
q_{k+1,j} := A \alpha^{j-1} q_{k,j-1} + B \alpha^j q_{k,j}
\qquad  \forall j.
\]
Let $C:=\max(|A|,|B|)$. The recursive definition of constants $q_{k,j}$ shows that
\[
|q_{k,j}| \leq C^k \alpha^{\binom{j}{2}} \Gbinom{k}{j}_\alpha
\qquad  \forall k,j,
\]
where $\Gbinom{k}{j}_\alpha$ is the already mentioned Gaussian binomial coefficient (here we use the
relation $\Gbinom{k}{j}_\alpha = \Gbinom{k-1}{j-1}_\alpha + \alpha^j\Gbinom{k-1}{j}_\alpha$,
see~\cite[Eq.~10.0.3]{specialFunctions}).
By~\eqref{eq:15A} we see that every derivative $h^{(k)}$ admits a continuation to $0$ from the left, with
value
\begin{align}
h^{(k)}(0^\sm) &= (-1)^k  h(0^\sm) \sum_{j=0}^k (-1)^j q_{k,j},                                    \label{eq:16A}
\intertext{and that}
|h^{(k)}(y)|   &\leq C^k \sum_{j=0}^k \alpha^{\binom{j}{2}} \Gbinom{k}{j}_\alpha |h(\alpha^j y)|.  \label{eq:17A}
\end{align}
Let $R$ be any positive parameter. We are assuming that $\alpha <1$, therefore $\alpha^j y$ is in
$[-R,0]$ whenever $y$ is in $[-R,0]$ and $j\geq 0$. As a consequence, by~\eqref{eq:17A} and denoting
$\|\cdot\|_{\infty,R}$ the sup norm in $[-R,0]$, we see that
\begin{align*}
\|h^{(k)}\|_{\infty,R}
\leq C^k \sum_{j=0}^k \alpha^{\binom{j}{2}} \Gbinom{k}{j}_\alpha \|h\|_{\infty,R}
=    C^k \|h\|_{\infty,R} \prod_{j=0}^{k-1}(1+\alpha^j),
\end{align*}
where the equality comes from the identity $\sum_{j=0}^k \alpha^{\binom{j}{2}} \Gbinom{k}{j}_\alpha t^j =
\prod_{j=0}^{k-1}(1+\alpha^j t)$ (see~\cite[Eq.~10.0.9]{specialFunctions}). The full product $c(\alpha)
:= \prod_{j=0}^{\infty}(1+\alpha^j)$ converges, because $\alpha < 1$, therefore we can conclude that
\begin{align*}
\|h^{(k)}\|_{\infty,R}
\leq  c(\alpha) \|h\|_{\infty,R} C^k.
\end{align*}
This shows that $\|h^{(k)}\|_{\infty,R}$ diverges with the order $k$ as $C^k$, at most.
Since $R$ is arbitrary, this suffices to assure that the power series $\sum_{k=0}^\infty
\frac{h^{(k)}(0)}{k!} y^k$ converges to $h$ for $y\leq 0$ and provides an analytic extension of $h$ as an
entire function. \\
%
%
We also need an explicit formula for the coefficients of the power series representing $h$. They can be
recovered from~\eqref{eq:16A} but the following argument is quicker. In fact, writing $h(y) =
\sum_{k=0}^\infty \beta_k y^k$, the relation in~\eqref{eq:13A} readily shows that
\[
\beta_{k+1} = \frac{A\alpha^k - B}{k+1} \beta_k
\qquad \forall k\geq 0.
\]
By homogeneity we can select $\beta_0 = 1$, because every non zero function is a multiple of what we get
under this assumption. This yields
\begin{equation}\label{eq:18A}
h^{(k)}(0)
= k! \beta_k
= \prod_{j=0}^{k-1}(A\alpha^j - B)
= \Big(\frac{-\alpha}{1-\alpha}\Big)^k\prod_{j=0}^{k-1}\Big(1 - \frac{\alpha^j(1-\alpha)}{\lambda}\Big).
\end{equation}
For it to produce an eigenfunction $h$ must satisfy also the condition~\eqref{eq:14A}, therefore
$\lambda$ must be a solution of the equation
\begin{align*}
\lambda
&= \int_{-\infty}^0 e^{\frac{w}{1-\alpha}}h(w) \dd w
 = \int_{-\infty}^0 e^{\frac{w}{1-\alpha}}\sum_{k=0}^\infty \beta_k w^k \dd w
 = \sum_{k=0}^\infty (-1)^k k!\beta_k (1-\alpha)^{k+1},
\end{align*}
where the exchange of the integral and the series is made possible by Fubini's theorem and~\eqref{eq:18A}
which shows that $k!|\beta_k| \ll_{\lambda,\alpha} \big(\frac{\alpha}{1-\alpha}\big)^k$ so that the
resulting series converges absolutely. %
Plugging~\eqref{eq:18A} into the previous equation we get that
\[
\lambda
= (1-\alpha) + (1-\alpha)\sum_{k=1}^\infty \alpha^k \prod_{j=0}^{k-1}\Big(1 - \frac{\alpha^j(1-\alpha)}{\lambda}\Big).
\]
Setting $(1-\alpha)/\lambda =: U$ it becomes
\begin{equation}\label{eq:19A}
0 = 1- U - \alpha U\sum_{k=0}^\infty \alpha^k \prod_{j=0}^{k}(1 - \alpha^j U).
\end{equation}
The quantity appearing to the right hand side coincides with $\prod_{j=0}^\infty (1-\alpha^j U)$:
in terms of $\alpha$-analogues symbols, this claim corresponds to the equality $\sum_{k=0}^\infty
(z;\alpha)_k \alpha^k = (1 - (z;\alpha)_\infty)/z$. We strongly suspect that this formula is well known
to every specialist in this area, but we have not been able to locate it precisely in literature. Thus,
we provide here a quick proof. Let
\[
F(z) := 1- z - \alpha z\sum_{k=0}^\infty \alpha^k \prod_{j=0}^{k}(1 - \alpha^j z).
\]
Once again we notice that the inner product can be bounded uniformly in $k$ and $|z|\leq 1$, so that
$F(z) \to 1$ as $z\to 0$. In the series defining $F$ we separate the term with $k=0$ and in each other
term with $k\geq 1$ we collect the term $1-z$ coming from the case $j=0$. This produces the identity
\begin{align*}
F(z)
&= 1- z - \alpha z(1-z)  - \alpha z (1-z)\Big(\sum_{k=1}^\infty \alpha^k \prod_{j=1}^{k}(1 - \alpha^j z)\Big)\\
&= (1- z)\Big(1 - \alpha z - \alpha^2 z \sum_{k=0}^\infty \alpha^k \prod_{j=0}^{k}(1 - \alpha^j (\alpha z))\Big)
 = (1- z)F(\alpha z).
\end{align*}
Iterating this identity we get that $F(z)=\prod_{j=0}^{L-1}(1-\alpha^j z) F(\alpha^L z)$, for every $L$.
Setting $L\to\infty$ we get the equality $F(z) = \prod_{j=0}^\infty (1-\alpha^j z)$, since $F(\alpha^L z)
\to 1$ as $L\to\infty$.\\
In this way we see that Equation~\eqref{eq:19A} actually says that
\begin{align*}
0 = \prod_{j=0}^\infty (1-\alpha^j U).
\end{align*}
The product converges absolutely, hence its unique zeros come from zero factors and this forces $U =
\alpha^{-n}$ for $n\in\N$ i.e. $\lambda=\alpha^n(1-\alpha)$. This proves that the unique (possible)
eigenvalues are the numbers $\alpha^n(1-\alpha)$. \\
Moreover, suppose that $\lambda = \alpha^n(1-\alpha)$ for a given $n\in\N$. Formula~\eqref{eq:18A} shows
that in this case $h^{(k)}(0)=0$ whenever $k>n$, so that the previous computations show that the
eigenfunction has the shape $x^{\frac{\alpha}{1-\alpha}} h(\log x)$ with $h$ which is actually a
polynomial with degree $\leq n$. The same formula also allows an explicit presentation for this
polynomial:
\begin{align*}
h(y)
&= \sum_{k=0}^n h^{(k)}(0)\frac{y^k}{k!}
 = \sum_{k=0}^n \Big(\frac{-\alpha}{1-\alpha}\Big)^k \prod_{j=0}^{k-1}(1-\alpha^{j-n})\frac{y^k}{k!}\\
&= \sum_{k=0}^n \Big(\prod_{j=0}^{k-1}\frac{1-\alpha^{j-n}}{1-\alpha^{-1}}\Big)\frac{y^k}{k!}
\end{align*}
showing that its degree is $n$, and giving the eigenfunction
\begin{equation}\label{eq:20A}
f(x)
= x^{\frac{\alpha}{1-\alpha}}
  \sum_{k=0}^n \Big(\prod_{j=0}^{k-1}\frac{1-\alpha^{j-n}}{1-\alpha^{-1}}\Big)\frac{(\log x)^k}{k!}.
\end{equation}
Finally, the eigenfunctions we have found generate the vector space
\[
\mathcal{P}
:= \big\{x^{\frac{\alpha}{1-\alpha}}P(\log x),\ P \in \C[z]\big\}.
\]
Let $f\in L^p[0,1]$. The change of variable $x:= e^w$ gives the identity
\[
\int_0^1 \big|f(x) - x^{\frac{\alpha}{1-\alpha}}P(\log x)\big|^p \dd x
= \int_{-\infty}^0 e^{\frac{1+(p-1)\alpha}{1-\alpha} w}\big| e^{\frac{-\alpha}{1-\alpha}w}f(e^w) - P(w)\big|^p \dd w.
\]
The case $P=0$ shows that $e^{\frac{-\alpha}{1-\alpha}w}f(e^w)$ is in
$L^p((-\infty,0],e^{\frac{1+(p-1)\alpha}{1-\alpha} w} \dd w)$. Since polynomials are dense in this space,
see~\cite[p.~40]{Szego}, the previous computation also shows that $\mathcal{P}$ is dense in $L^p[0,1]$.
\end{proof}

\section{Norm of iterates}\label{sec:4A}
The study of the behaviour of the norm of the $n$-th iterated of $V = T_1$ when $n$ diverges is a
classical problem which in the space $L^2[0,1]$ has been solved by Lao and Whitley~\cite{LaoWhitley} for
the order and by Kershaw~\cite{Kershaw2} for the asymptotic: $\|n! V^n\|_{2,2} \to 1/2$. See
also~\cite{LittleReade} for an elementary proof and \cite{BottcherDorfler} for a proof with explicit
bounds. Later Eveson extended this result to operators of the form $\int_0^x k(x-s)f(s) \dd s$ under mild
conditions for the kernel both in the $L^2[0,1]$ case~\cite{Eveson} and for the general $L^p[0,1]$
case~\cite{Eveson2}. Adell and Gallardo--Gutierrez~\cite{AdellGallardo-Gutierrez} proved explicit bounds
for each $s$-th Riemann--Liouville fractional integration operators (which coincides with the $s$-th
iteration of $V$ when $s$ is an integer) in the $L^p[0,1]$ case. \\
We prove a similar result for operators $T_\alpha$ when considered as map from the space $L^p[0,1]$ in
itself.
\begin{theorem}\label{th:4A}
Fix $p\in(1,+\infty)$, $\alpha >0$, and let $n\to \infty$. Then
\[
\log \|T_\alpha^n\|_{p,p}
= \begin{cases}
    n\log(1-\alpha) + o_{p,\alpha}(n)              & \text{if } \alpha < 1 ,\\
    -n\log n + O_{p}(n)                            & \text{if } \alpha = 1 ,\\
    -\frac{1}{2}n^2\log \alpha + O_{p,\alpha}(n)   & \text{if } \alpha > 1 .
  \end{cases}
\]
\end{theorem}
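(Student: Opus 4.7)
The plan is to obtain matching asymptotic upper and lower bounds in each of the three regimes. The upper direction is uniform: from~\eqref{eq:10A} we already have $\|T_\alpha^n\|_{p,p}\leq b_n$ for every $\alpha>0$, so the whole upper story reduces to the asymptotic of $\log b_n$. Starting from
\[
\log b_n = (n-1)\log|1-\alpha| - \sum_{k=1}^{n-1}\log|1-\alpha^k| \qquad (\alpha\neq 1),
\]
with $b_n = 1/(n-1)!$ at $\alpha=1$, the three cases are handled separately. For $\alpha<1$ the series $\sum_{k\geq 1}\log(1-\alpha^k)$ converges absolutely, giving $\log b_n = n\log(1-\alpha)+O_\alpha(1)$. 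For $\alpha>1$ the factorisation $\log(\alpha^k-1)=k\log\alpha+\log(1-\alpha^{-k})$ with convergent tail $\sum_{k\geq 1}\log(1-\alpha^{-k})$ yields $\log b_n = -\binom{n}{2}\log\alpha + O_\alpha(n) = -\tfrac12 n^2\log\alpha + O_\alpha(n)$. For $\alpha=1$ Stirling gives $\log b_n = -n\log n + O(n)$.

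For the lower direction the subcritical and supercritical regimes need different inputs. When $\alpha<1$ the bound is essentially free: Theorem~\ref{th:3A} identifies $\rho(T_\alpha)=1-\alpha$ (the eigenvalue is attained and~\eqref{eq:11A} supplies the matching upper bound on $\rho$), and Gelfand's formula then gives $\log\|T_\alpha^n\|_{p,p} = n\log(1-\alpha) + o_{p,\alpha}(n)$, which is exactly the claim.

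When $\alpha\geq 1$ the spectral radius vanishes, so a genuine test function is required; the natural one is $f\equiv 1$. A direct induction based on $T_\alpha(x^m) = x^{\alpha(m+1)}/(m+1)$ yields the closed form
\[
(T_\alpha^n 1)(x) = \frac{b_n}{s_n}\,x^{c_n},
\qquad s_n := \frac{1-\alpha^n}{1-\alpha},\ \ c_n := \alpha s_n,
\]
with the usual limit interpretation at $\alpha=1$, and hence
\[
\|T_\alpha^n\|_{p,p} \;\geq\; \|T_\alpha^n 1\|_p \;=\; \frac{b_n}{s_n\,(pc_n+1)^{1/p}}.
\]
Taking logarithms reduces the lower bound to comparing $\log b_n$ with the corrections $\log s_n$ and $\tfrac{1}{p}\log(pc_n+1)$. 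For $\alpha=1$ both corrections equal $\log n + O(1)$, which is absorbed into the $O(n)$ error and produces $-n\log n + O(n)$. For $\alpha>1$ both are $n\log\alpha + O_\alpha(1) = O_{p,\alpha}(n)$, so the lower bound still matches $-\tfrac12 n^2\log\alpha + O_{p,\alpha}(n)$.

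The only real obstacle is bookkeeping in the $\alpha>1$ case: one has to track signs carefully (using $|1-\alpha^k|=\alpha^k-1$), verify that the quadratic term $-\tfrac12 n^2\log\alpha$ truly arises from the arithmetic sum $\sum_{k=1}^{n-1}k\log\alpha$, and check that all remaining pieces (including $(n-1)\log(\alpha-1)$ and the tail sum) fit into the $O_{p,\alpha}(n)$ error. No new conceptual ingredient is required beyond the uniform bound $\|T_\alpha^n\|_{p,p}\leq b_n$, the spectral-radius input from Theorem~\ref{th:3A}, and the constant test function.
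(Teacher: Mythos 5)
Your proof is correct, and its lower-bound mechanism for $\alpha\geq 1$ is genuinely different from the paper's. You share with the paper the upper bound $\|T_\alpha^n\|_{p,p}\leq b_n$ from~\eqref{eq:10A} together with the elementary asymptotics of $\log b_n$, and the use of Gelfand's formula with $\rho(T_\alpha)=1-\alpha$ (from Theorem~\ref{th:3A}) when $\alpha<1$. But where the paper, for $\alpha>1$, first proves Proposition~\ref{prop:3A} by induction (the pointwise bound $g_n(z)\geq(1-z^{1/((n-1)\alpha)})^{n-1}$), inserts it into the kernel formula of Proposition~\ref{prop:2A}, and evaluates the resulting Beta integral with Stirling's formula to reach $\|T_\alpha^n\|_{p,p}\geq b_n c^{-n}n^{O_{\alpha,p}(1)}$, you exploit the fact that $T_\alpha$ maps monomials to monomials and compute $T_\alpha^n 1$ in closed form. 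Your identity checks out: writing $(T_\alpha^n 1)(x)=d_n x^{e_n}$, one has $e_k+1=s_{k+1}$, so the coefficients telescope to $d_n=b_n/s_n$, and the exponent $c_n=\alpha s_n$ agrees with the paper's $a_n+\alpha^n$. This makes the lower bound exact rather than asymptotic, dispenses with Proposition~\ref{prop:3A} and the Beta/Stirling step entirely (Stirling survives only to expand $\log b_n$ at $\alpha=1$), and as a bonus gives a self-contained treatment of the case $\alpha=1$, which the paper delegates to the computations of Lao--Whitley. What the paper's heavier route buys is Proposition~\ref{prop:3A} itself: a lower bound for the full kernel $K_n(x,y)$ valid at every point, which carries more information than the action on the single test function $f\equiv 1$; for the purposes of Theorem~\ref{th:4A}, though, your computation suffices. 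One small imprecision worth fixing: the second correction term is $\tfrac1p\log(pc_n+1)=\tfrac np\log\alpha+O_{p,\alpha}(1)$ for $\alpha>1$ (resp.\ $\tfrac1p\log n+O_p(1)$ at $\alpha=1$), not $n\log\alpha+O_\alpha(1)$ (resp.\ $\log n+O(1)$) as written; this is harmless, since in every case both corrections are absorbed into the stated $O_{p,\alpha}(n)$ and $O_p(n)$ errors.
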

\noindent %
Actually, by the work of the cited authors, in the case $\alpha=1$ the full expansion is known up to the
order $o_p(1)$, so that the conclusions in Theorem~\ref{th:4A} are less precise. Nevertheless, they
already show in a quantitative way the threshold effect associated with the passage of $\alpha$ through
$1$: the logarithm of the norm diverges linearly when $\alpha < 1$ and quadratically when $\alpha >1$.

The case $\alpha < 1$ comes from Gelfand's formula for the spectral radius and the case $\alpha=1$ is a
weak version of the computations in~\cite{LaoWhitley}, hence only the case $\alpha>1$ must be proved. For
this purpose we use the following lower bound for the functions $g_n$ appearing in Proposition~\ref{prop:2A}.
\begin{proposition}\label{prop:3A}
Let $\alpha > 0$ and $n\geq 2$. Then
\[
g_n(z) \geq (1 - z^{1/((n-1)\alpha)})^{n-1}
\qquad  \forall z\in [0,1].
\]
\end{proposition}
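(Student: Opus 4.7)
The natural approach is induction on $n$, with the base case $n=2$ being the equality $g_2(z) = 1 - z^{1/\alpha}$ from the formulas following Proposition~\ref{prop:2A}. For the inductive step the first task is to rewrite the recursion in Proposition~\ref{prop:2A} in a more convenient form. The change of variable $u = w^{-\alpha^n}z$ converts the integral defining $g_{n+1}$ into
\[
g_{n+1}(z) = \nu_n z^{\nu_n} \int_z^1 g_n(u)\, u^{-\nu_n - 1} \dd u,
\qquad
\nu_n := \tfrac{1}{\alpha} + \tfrac{1}{\alpha^2} + \cdots + \tfrac{1}{\alpha^n},
\]
which is cleaner for insertion of the inductive hypothesis because the dependence on $z$ is now concentrated in the endpoint and in the prefactor.

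Inserting $g_n(u) \geq (1 - u^{1/((n-1)\alpha)})^{n-1}$ and then setting $v = u^{1/((n-1)\alpha)}$ reduces the problem to verifying the concrete analytic inequality
\[
(n-1)\alpha\, \nu_n \,z^{\nu_n} \int_{z^{1/((n-1)\alpha)}}^1 (1-v)^{n-1} v^{-(n-1)\alpha\,\nu_n - 1}\dd v \geq (1 - z^{1/(n\alpha)})^n.
\]
Both sides equal $0$ at $z=1$, so the plan is to compare their derivatives there and then show that the quotient (LHS)/(RHS) is monotone in $z$. An effective further substitution is $v = 1 - (1 - z^{1/(n\alpha)})\tau$, which turns the factor $(1-v)^{n-1}$ into $((1-z^{1/(n\alpha)})\tau)^{n-1}$ and makes the power $(1-z^{1/(n\alpha)})^n$ appear explicitly on the left, leaving an integral inequality that can be attacked by Jensen's or H\"{o}lder's inequality applied to the remaining weight.

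The main obstacle is that the weight $v^{-(n-1)\alpha\,\nu_n - 1}$ has no elementary antiderivative for generic $\alpha$, so the comparison cannot be carried out by direct evaluation. To bypass this, an alternative route is to iterate the recursion above and express $g_n$ as the $(n-1)$-fold integral $[n-1]_\alpha! \cdot z^{\nu_{n-1}} \int_D \prod_{k=1}^{n-1} v_k^{-2} \dd v$ over the nested domain $D = \{z^{1/\alpha^{n-1}} \leq v_1,\ v_k^\alpha \leq v_{k+1},\ v_{n-1} \leq 1\}$, where $[n-1]_\alpha! := \prod_{k=1}^{n-1}[k]_\alpha$ is the $\alpha$-factorial. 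Applying Jensen's inequality to the nested Markov-chain-like structure of $D$, with the convex function $t\mapsto t^{n-1}$ and the logarithmic parametrization $v_k = e^{-s_k}$, one recovers precisely a quantity of the form $(1 - z^{1/((n-1)\alpha)})^{n-1}$, completing the induction.
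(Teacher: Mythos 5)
Your two reformulations are correct as computations: the substitution $u=w^{-\alpha^n}z$ does turn the recursion of Proposition~\ref{prop:2A} into $g_{n+1}(z)=\nu_n z^{\nu_n}\int_z^1 g_n(u)\,u^{-\nu_n-1}\dd u$ with $\nu_n=\sum_{k=1}^n\alpha^{-k}$ (use $a_n+1=1+\alpha+\cdots+\alpha^{n-1}$), and your iterated representation of $g_n$ checks against the paper's closed forms for $n=2,3$. But neither route is brought to a proof: the decisive inequality is never established. In the first route, after inserting the inductive hypothesis you only announce a plan (``compare derivatives at $z=1$, show the quotient is monotone, attack by Jensen or H\"{o}lder''); the monotonicity of that quotient \emph{is} the hard content, and nothing in the proposal addresses it. Your diagnosis of the obstacle is also off: $(1-v)^{n-1}v^{-c-1}$ is a finite sum of powers of $v$, so it has an elementary antiderivative for every $\alpha$; the true difficulty is that the resulting closed form is an alternating sum, useless for lower bounds --- exactly the phenomenon the paper flags for~\eqref{eq:8A}--\eqref{eq:9A}. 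In the second route, the concluding sentence (``applying Jensen's inequality to the nested Markov-chain-like structure \dots one recovers precisely $(1-z^{1/((n-1)\alpha)})^{n-1}$'') is an unsubstantiated assertion: you never specify the measure or average to which Jensen is applied, where the convexity of $t\mapsto t^{n-1}$ enters, or how a product of $n-1$ equal factors could emerge from a single convexity estimate. Note also that this route bounds $g_n$ directly from the $(n-1)$-fold integral, which would make the induction superfluous, so ``completing the induction'' signals that the logic of the last paragraph is not actually in place.

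What is missing is a concrete lower-bound mechanism, and the paper's is elementary: in the recursion, restrict the integration to a subinterval ending at a free parameter, bound the monotone factor $g_n$-term by its value at that endpoint, evaluate the remaining power integral exactly, and then choose the parameter so the two resulting factors are equal. In the paper's $w$-variable this gives $g_{n+1}(z)\geq(1-z^{e_n})^n$ with $e_n=(a_n+1)/((n-1)\alpha(a_n+1)+\alpha^n)$, and the inductive claim follows from $e_n\geq 1/(n\alpha)$, equivalent to $a_n+1\geq\alpha^{n-1}$. The same trick transfers verbatim to your cleaner recursion: since $(1-u^{1/((n-1)\alpha)})^{n-1}$ is decreasing in $u$, truncate $\int_z^1$ to $\int_z^\delta$, bound the bracket by its value at $\delta$, obtain $g_{n+1}(z)\geq(1-\delta^{1/((n-1)\alpha)})^{n-1}\bigl(1-(z/\delta)^{\nu_n}\bigr)$, and optimize over $\delta$. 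That one-parameter truncation-plus-balancing step is the idea your proposal lacks; with it, your setup closes, and without it the proposal is an outline, not a proof.
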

\begin{proof}
The inequality for $n=2$ holds as equality. Assume that the claim is true for $n$. Then, by the integral
relation in Proposition~\ref{prop:2A} one gets
\[
g_{n+1}(z)
\geq (a_n+1)\int_{z^{1/\alpha^n}}^1 w^{a_n}(1 - (zw^{-\alpha^n})^{1/{((n-1)\alpha)}})^{n-1} \dd w.
\]
Let $\gamma$ be a parameter in $[z^{1/\alpha^n},1]$ that we will choose later. Then
\begin{align*}
g_{n+1}(z)
&\geq (a_n+1)\int_{\gamma}^1 w^{a_n}(1 - (zw^{-\alpha^n})^{1/{((n-1)\alpha)}})^{n-1} \dd w \\
&\geq (1 - (z \gamma^{-\alpha^n})^{1/{((n-1)\alpha)}})^{n-1} (a_n+1)\int_{\gamma}^1 w^{a_n} \dd w \\
&=    (1 - (z \gamma^{-\alpha^n})^{1/{((n-1)\alpha)}})^{n-1} (1 - \gamma^{a_n+1}).
\end{align*}
We set $\gamma$ such that $\gamma^{a_n+1} = (z \gamma^{-\alpha^n})^{1/{((n-1)\alpha)}}$, i.e. $\gamma =
z^{1/((n-1)\alpha(a_n+1)+\alpha^n)}$. This value is in the allowed interval $[z^{1/\alpha^n},1]$, hence
\begin{align*}
g_{n+1}(z)
&\geq (1 - z^{(a_n+1)/((n-1)\alpha(a_n+1)+\alpha^n)})^n.
\end{align*}
The definition of $a_n$ implies that $(a_n+1)/((n-1)\alpha(a_n+1)+\alpha^n) \geq 1/(n\alpha)$ for
every $n$ (because it is equivalent to $a_n+1\geq \alpha^{n-1}$ which is true since $a_n+1 =
(\alpha^n-1)/(\alpha-1)$), so that
\begin{align*}
g_{n+1}(z)
&\geq (1 - z^{1/(n\alpha)})^n.
\end{align*}
This proves the claim by induction.
\end{proof}
\noindent %
Now we can prove the remaining case $\alpha >1$ in Theorem~\ref{th:4A}. By Propositions~\ref{prop:2A}
and~\ref{prop:3A} we get that
\[
K_n(x,y) \geq b_n
              \chi_{[0,x^{\alpha^n}]}(y) x^{a_n}
              (1-(x^{-\alpha^n}y)^{1/((n-1)\alpha)})^{n-1}.
\]
Let $f(x)=1$ for every $x$. Then $\|f\|_p = 1$, and
\begin{align*}
(T_\alpha^n f)(x)
&=    \int_0^1 K_n(x,y)\dd y
 \geq b_n x^{a_n}\int_0^{x^{\alpha^n}} \!\!\!\!\!\! (1- (yx^{-\alpha^n})^{1/((n-1)\alpha)})^{n-1}\dd y.
\intertext{Setting $z := (yx^{-\alpha^n})^{1/((n-1)\alpha)}$, i.e. $y=x^{\alpha^n} z^{(n-1)\alpha}$, this
becomes}
&=    b_n x^{a_n+\alpha^n} (n-1)\alpha\int_0^1 w^{(n-1)\alpha -1}(1-w)^{n-1}\dd z\\
&=    b_n x^{a_n+\alpha^n} (n-1)\alpha B((n-1)\alpha,n)
 =    b_n x^{a_n+\alpha^n} (n-1)\alpha\frac{\Gamma((n-1)\alpha)\Gamma(n)}{\Gamma((n-1)\alpha+n)},
\end{align*}
where we have used the representation of the Beta function in terms of gammas. As a consequence,
\begin{align*}
\big\|T_\alpha^n\big\|_{p,p}
&\geq \big\|T_\alpha^n f\big\|_p
 \geq \frac{(n-1)\alpha\, b_n}{(a_n p+\alpha^n p+1)^{1/p}}
      \frac{\Gamma((n-1)\alpha)\Gamma(n)}{\Gamma((n-1)\alpha+n)}.
\end{align*}
Using Stirling asymptotic formula~\cite[Th.~1.4.1]{specialFunctions} we deduce that there exists a
constant $c>1$, depending on $\alpha$ and $p$ but independent of $n$, such that
\begin{align*}
\big\|T_\alpha^n\big\|_{p,p}
\geq b_n c^{-n} n^{O_{\alpha,p}(1)}
\qquad \text{as $n\to\infty$}.
\end{align*}
By~\eqref{eq:10A} we already know that $\|T_\alpha^n\|_{p,p} \leq b_n$, hence $\|T_\alpha^n\|_{p,p} {=}
b_n \exp(O_{\alpha,p}(n))$. Everything proved up to now holds for any positive $\alpha$. Suppose that
$\alpha > 1$, then the definition of $b_n$ shows that in this case $b_n = \alpha^{-n^2/2}
\exp(O_{\alpha}(n))$, so that
\[
\|T_\alpha^n\|_{p,p} = \alpha^{-n^2/2}\exp(O_{\alpha,p}(n))
\qquad \text{as $n\to\infty$},
\]
as claimed.

\section{The case \texorpdfstring{$p=q=2$}{p=q=2}}\label{sec:5A}
When $T_\alpha\colon L^2[0,1] \to L^2[0,1]$ we can compute exactly spectrum and eigenfunctions for
$T^*_{\alpha}{T}^\ph_{\alpha}$ (see Theorem~\ref{th:5A}), in particular we get an exact formula for
$\|T_\alpha\|_{2,2}$ and its asymptotic for $\alpha\to 0$ and $\alpha \to\infty$ (see
Corollary~\ref{cor:1A}).
\begin{theorem}\label{th:5A}
Let
\begin{equation}\label{eq:21A}
H_\alpha(z) := \sum_{k=0}^\infty \frac{(-z)^k}{k!\prod_{j=1}^k(j-\frac{1}{1+\alpha})}.
\end{equation}
All zeros for $H_\alpha$ are real and positive. Let $\{h_{n}(\alpha)\}_n$ be the sequence of these zeros,
ordered by their size. The spectrum for $T^*_\alpha T^\ph_\alpha$ coincides with the set
\begin{equation}\label{eq:22A}
\sigma_0(T^*_\alpha T^\ph_\alpha)
= \Big\{\frac{\alpha}{(1+\alpha)^2} \frac{1}{h_n(\alpha)}\colon n\in\N\Big\},
\end{equation}
each eigenspace is one-dimensional and the $n$-th eigenspace is generated by the function
\begin{equation}\label{eq:23A}
\sum_{k=0}^\infty \frac{(- h_n(\alpha))^k}{k!\prod_{j=1}^k(j-\frac{1}{1+\alpha})} x^{\frac{(1+\alpha)k}{\alpha}}.
\end{equation}
\end{theorem}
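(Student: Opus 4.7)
The plan is to derive an ODE from the integral eigenvalue equation $T_\alpha^* T_\alpha f = \lambda f$, find its analytic solutions by a power-series ansatz, impose a boundary condition at $x=1$, and recognize that the admissible eigenvalues correspond to zeros of $H_\alpha$. First, evaluating the integral equation at $x=1$ forces $f(1) = 0$. Differentiating it once with respect to $x$ yields
\[
\lambda f'(x) = -\tfrac{1}{\alpha} x^{1/\alpha - 1} \int_0^x f(y) \dd y,
\]
and differentiating a second time while using this identity to eliminate the integral produces the second-order linear ODE
\[
\alpha \lambda x f''(x) - (1-\alpha)\lambda f'(x) + x^{1/\alpha} f(x) = 0.
\]

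Next, I would plug the ansatz $f(x) = \sum_{k\ge 0} c_k x^{(1+\alpha)k/\alpha}$ into this ODE. The exponent $\beta := (1+\alpha)/\alpha$ is forced by the requirement that the exponent coming from $x^{1/\alpha} f$ match those coming from $x f''$ and $f'$. Comparing coefficients produces the recursion
\[
c_k = -\frac{\alpha}{\lambda(1+\alpha)^2}\cdot\frac{c_{k-1}}{k\bigl(k - \tfrac{1}{1+\alpha}\bigr)}.
\]
Setting $z := \alpha/[\lambda(1+\alpha)^2]$ and normalising $c_0 = 1$ gives the closed form $c_k = (-z)^k/[k!\prod_{j=1}^k(j - \tfrac{1}{1+\alpha})]$, and the super-factorial denominators make the series entire. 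The boundary condition $f(1) = 0$ then reads exactly $H_\alpha(z) = 0$, matching~\eqref{eq:23A}. A term-by-term back-substitution shows that conversely, whenever $H_\alpha(z) = 0$ the resulting series satisfies the original integral equation: performing the double integration yields $\lambda f(x) - \lambda f(1)$, which equals $\lambda f(x)$ precisely when $f(1) = 0$.

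It remains to verify that all zeros of $H_\alpha$ are real and positive, and that they exhaust the point spectrum. For the first point I would identify $H_\alpha$ with a Bessel function: writing $c := 1/(1+\alpha) \in (0,1)$ and using $\prod_{j=1}^k (j-c) = \Gamma(k+1-c)/\Gamma(1-c)$, one obtains
\[
H_\alpha(z) = \Gamma(1-c)\, z^{c/2}\, J_{-c}(2\sqrt{z}),
\]
and since $-c\in(-1,0)$ the classical theorem that $J_\nu$ has only real zeros for every real $\nu > -1$ rules out complex zeros; positivity then follows from the fact that every eigenvalue of the positive self-adjoint operator $T_\alpha^* T_\alpha$ is nonnegative (and is strictly positive, since $T_\alpha$ is injective by the Lebesgue differentiation theorem). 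For the second point, $T_\alpha^* T_\alpha$ is compact, so by the spectral theorem for compact self-adjoint operators $\sigma(T_\alpha^* T_\alpha) = \sigma_0(T_\alpha^* T_\alpha)\cup\{0\}$; one-dimensionality of each eigenspace is evident from the fact that the recurrence pins down $c_k$ up to the single free constant $c_0$.

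The main obstacle I anticipate is the regularity step: the ODE has a regular singular point at $x=0$ with indicial exponents $0$ and $1/\alpha$, so \emph{a priori} there is a second local solution branch proportional to $x^{1/\alpha}$, and one must argue that it cannot appear in any eigenfunction. This uses the fact that eigenfunctions lie in $\mathrm{range}(T_\alpha^* T_\alpha)$, so the identity $f = \lambda^{-1} T_\alpha^* T_\alpha f$ can be iterated to bootstrap $f$ into the analytic class at the origin, killing the $x^{1/\alpha}$ branch. A secondary technical point is justifying the termwise interchange of sum and integral when plugging the power series into the integral equation; this is routine since the series converges absolutely and uniformly on $[0,1]$.
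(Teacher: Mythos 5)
Your skeleton coincides with the paper's proof in all its main computational steps: your ODE is the paper's equation~\eqref{eq:27A} multiplied through by $\alpha\lambda x$; the paper reaches the same power series after the substitution $z=x^{(1+\alpha)/(3\alpha)}$ (their equation~\eqref{eq:28A}), while you plug the Frobenius ansatz directly into the $x$-equation, which is fine and slightly shorter; the necessity of $H_\alpha(z)=0$ via $f(1)=0$ and the sufficiency via termwise double integration (yielding $\lambda f(x)-\lambda f(1)$) are exactly the paper's steps. One ingredient is genuinely different: your identification $H_\alpha(z)=\Gamma(1-c)\,z^{c/2}J_{-c}(2\sqrt{z})$ with $c=1/(1+\alpha)$ is correct (via $\prod_{j=1}^k(j-c)=\Gamma(k+1-c)/\Gamma(1-c)$, and $z^{c/2}J_{-c}(2\sqrt z)$ is single-valued, entire in $z$), and Lommel's theorem for $\nu=-c>-1$ then gives reality of the zeros; positivity is even immediate since $H_\alpha(z)\ge 1$ for $z\le 0$. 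The paper instead gets reality and positivity for free from operator theory: every complex zero of $H_\alpha$ is shown to produce an eigenvalue of the compact, self-adjoint, injective operator $T^*_\alpha T^\ph_\alpha$, hence must be real and positive. Your route needs classical Bessel theory but yields extra information (simplicity and interlacing of the $h_n(\alpha)$); the paper's is softer and self-contained.

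The genuine gap is the step you flagged as the ``main obstacle'': excluding the second Frobenius branch. Your proposed mechanism --- iterate $f=\lambda^{-1}T^*_\alpha T^\ph_\alpha f$ to bootstrap $f$ into the \emph{analytic} class at the origin, thereby killing the $x^{1/\alpha}$ branch --- does not work, because when $1/\alpha\in\N$ the second branch can itself be analytic at $0$. Concretely, for $\alpha=1/2$ the paper's substitution is the identity ($z=x$) and the equation becomes $g''-g'/z+2\omega^2 zg=0$; reduction of order gives the second solution $G_1(z)=G_0(z)\int_0^z s\,G_0(s)^{-2}\dd s=\tfrac{z^2}{2}+\cdots$, a log-free power series, analytic at the origin. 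So analyticity cannot separate the two branches, and no amount of smoothing by $(T^*_\alpha T^\ph_\alpha)^n$ will. The correct exclusion is quantitative rather than qualitative, and a single application of your own differentiated identity already provides it: from $\lambda f'(x)=-\alpha^{-1}x^{1/\alpha-1}\int_0^x f(y)\dd y$ and boundedness of $f$ one gets $f'(x)=O(x^{1/\alpha})$ as $x\to 0$, whereas the second branch has $f'\asymp x^{1/\alpha-1}$ (possibly times powers of $\log x$), which is strictly larger for every $\alpha>0$. This is precisely the paper's argument that $g'/z^2$ must admit a finite limit at $0$ for any eigenfunction, while $G_1'(z)/z^2\sim z^{-3\alpha/(1+\alpha)}G_0(z)^{-1}$ diverges. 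Note finally that one-dimensionality of each eigenspace also rests on this exclusion, not merely on the recursion determining $c_k$ from $c_0$: the recursion only shows uniqueness \emph{within} the chosen ansatz.
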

\noindent %
This result extends Halmos' computation for the classic Volterra operator $V=T_1$, see~\cite[Problem~188,
p.~100]{Halmos}.
%
In fact, for $\alpha=1$ the function $H_1$ as given in~\eqref{eq:21A} becomes
\[
H_1(z)
= \sum_{k=0}^\infty \frac{(-z)^k}{k!\prod_{j=1}^k(j-\frac{1}{2})}
= \sum_{k=0}^\infty \frac{(-4z)^k}{(2k)!}
= \cos(2\sqrt{z})
\]
so that the spectrum~\eqref{eq:22A} becomes $\{\frac{4}{\pi^2}(1+2n)^{-2}, n \in \N\}$ and the
$n$-th eigenvalue gives the eigenfunction
\[
\sum_{k=0}^\infty \frac{(- \frac{\pi^2}{16}(1+2n)^2)^k}{k!\prod_{j=1}^k(j-\frac{1}{2})} x^{2k}
= \cos(\tfrac{\pi}{2}(1+2n) x).
\]
\begin{proof}
The operator $T^*_\alpha T^\ph_\alpha$ is compact, selfadjoint and strictly positive since
$T_\alpha$ is injective, thus the eigenvalue equation with $\lambda := 1/\omega^2$ says that
\begin{equation}\label{eq:24A}
 \omega^2\int_{x^{1/\alpha}}^1 \int_0^{z^\alpha} f(u) \dd u \dd z = f.
\end{equation}
We know that $T_\alpha f$ is absolutely continuous, thus the equation shows that $f\in C^0([0,1])\cap
C^1((0,1])$. The derivative of this equation produces the equality
\begin{equation}\label{eq:25A}
 -\frac{\omega^2}{\alpha} x^{\frac{1-\alpha}{\alpha}}\int_0^{x} f(u) \dd u = f'.
\end{equation}
Since $\frac{1}{x}\int_0^{x} f(u) \dd u$ goes to $f(0)$ as $x\to 0$, this equation shows both that
actually $f\in C^1([0,1])$, with $f'(0)=0$, and that $f\in C^2((0,1])$. A further derivation shows that
\begin{equation}\label{eq:26A}
 -\frac{(1-\alpha)\omega^2}{\alpha^2} x^{\frac{1-2\alpha}{\alpha}}\int_0^{x} f(u) \dd u
 -\frac{\omega^2}{\alpha} x^{\frac{1-\alpha}{\alpha}}f = f''.
\end{equation}
When $\alpha<1$, this equation shows that $f\in C^2([0,1])$, with $f''(0)=0$ (but for $\alpha=1$ only the
existence of $f''(0)$ can be deduced, and for $\alpha>1$ also the existence of $f''(0)$ is not evident).
Moreover, combining~\eqref{eq:25A} and~\eqref{eq:26A} we get the differential equation
\begin{equation}\label{eq:27A}
  f''
  - \frac{1-\alpha}{\alpha}\frac{f'}{x}
  + \frac{\omega^2}{\alpha} x^{\frac{1-\alpha}{\alpha}}f = 0.
\end{equation}
This is a homogeneous second order differential equation, with regular (i.e., analytic) coefficients in
$(0,1)$, and a singularity at $0$ coming from the quotient $f'/x$ and from the possible non-analyticity
of $x^{\frac{1-\alpha}{\alpha}}$. The first term is actually under control, since we know that the
solutions we are looking for have $f'(0)=0$; the second term is more difficult to deal with, but we can
improve it with a suitable change of variable. Suppose $f(x)= g(x^\beta)$, for some $\beta>0$. This
produces the equalities
\[
f' = \beta x^{\beta-1}g',
\qquad
f'' = \beta(\beta-1) x^{\beta-2}g' + \beta^2 x^{2\beta-2}g''
\]
which in~\eqref{eq:27A} give the equation
\[
  \beta^2 x^{2\beta-2}g''
  + \beta\Big(\beta - \frac{1}{\alpha}\Big) x^{\beta-2}g'
  + \frac{\omega^2}{\alpha} x^{\frac{1-\alpha}{\alpha}}g = 0,
\]
i.e.,
\[
  g''
  + \Big(1 - \frac{1}{\alpha\beta}\Big) x^{-\beta}g'
  + \frac{\omega^2}{\alpha\beta^2} x^{\frac{1+\alpha}{\alpha}-2\beta}g = 0.
\]
Thus, setting $\frac{1+\alpha}{\alpha}-2\beta = \beta$, i.e. $\beta := \frac{1+\alpha}{3\alpha}$, we get
the equation
\begin{equation}\label{eq:28A}
  g''
  - \frac{2-\alpha}{1+\alpha} \frac{g'}{z}
  + \frac{9\alpha\omega^2 z}{(1+\alpha)^2} g = 0,
\end{equation}
where derivatives are with respect to $z$ and $f(x) = g(x^{\frac{1+\alpha}{3\alpha}})$ (and $g(z) =
f(z^{\frac{3\alpha}{1+\alpha}})$). Note that $\beta=1$ in case $\alpha=1/2$ (so that in this case the
transformation of $x$ into $z$ is the identity), and that~\eqref{eq:28A} coincides with the equation
satisfied by Airy's function when $\alpha=2$ (after a suitable rescaling of the variable).\\
When written for the $g$ function, the eigenvalue equation~\eqref{eq:24A} reads
\[
\omega^2\int_{z^{\frac{3}{1+\alpha}}}^1 \int_0^{s^{\frac{1+\alpha}{3}}} v^{\frac{2\alpha-1}{1+\alpha}} g(v) \dd v \dd s
= \frac{3\alpha}{1+\alpha} g(z).
\]
A derivation with respect to $z$ and a division by $z$ show that
\[
\omega^2 z^{\frac{1-2\alpha}{1+\alpha}}\int_0^z v^{\frac{2\alpha-1}{1+\alpha}} g(v) \dd v
= -\alpha \frac{g'}{z}.
\]
Restoring $f$ in this integral we get
\[
\omega^2\frac{1+\alpha}{3\alpha} z \cdot z^{\frac{-3\alpha}{1+\alpha}}\int_0^{z^{\frac{3\alpha}{1+\alpha}}} f(w) \dd w
= -\alpha \frac{g'}{z}.
\]
Since $f$ is continuous at $0$, this formula shows that $g'/z^2$ admits a finite limit as $z\to 0$.
In particular, both $g'(0)$ and $g''(0)$ exist and equal zero.\\
We look for a solution admitting a representation as power series $g(z)=\sum_{k=0}^\infty c_k z^k$.
Plugging the power series into~\eqref{eq:28A} and with the assumptions that $c_1=c_2=0$, and that $c_0=1$
(by homogeneity), we get
\[
c_0 = 1,
\quad
c_1 =c_2 = 0,
\qquad
(k+3)\Big(k + \frac{3\alpha}{1+\alpha}\Big)c_{k+3}
  = - \frac{9\alpha\omega^2}{(1+\alpha)^2} c_k
\quad
\forall\ k \in\N.
\]
Iterating the recursion, we get for the coefficients the explicit formula:
\[
c_{3k} = \frac{(- \frac{\alpha\omega^2}{(1+\alpha)^2})^k}{k!\prod_{j=1}^k(j-\frac{1}{1+\alpha})}
\quad\forall\ k\in\N,
\qquad
c_{\ell} = 0 \quad \text{otherwise}.
\]
These coefficients produce the function
\[
G_0(z)
:= \sum_{k=0}^\infty \frac{(- \frac{\alpha\omega^2}{(1+\alpha)^2})^k}{k!\prod_{j=1}^k(j-\frac{1}{1+\alpha})} z^{3k}
\]
which converges everywhere and therefore is a true solution of~\eqref{eq:28A}.
In terms of $x$, this produces the function
\[
f_0(x)
:= G_0(x^{\frac{1+\alpha}{3\alpha}})
 = \sum_{k=0}^\infty \frac{(- \frac{\alpha\omega^2}{(1+\alpha)^2})^k}{k!\prod_{j=1}^k(j-\frac{1}{1+\alpha})} x^{\frac{(1+\alpha)k}{\alpha}}.
\]
Now we produce a second and independent solution for~\eqref{eq:28A}. Since $G_0(0)=1$, locally it is not
$0$. Thus, an independent solution of~\eqref{eq:28A} can be obtained setting $g=G_0 R$ for a suitable $R$
and solving the resulting equation for $R$. After some computations, the new solution $G_1$ appears as
\[
R(z) := \int_0^z s^{\frac{2-\alpha}{1+\alpha}} G_0(s)^{-2} \dd s
\quad,\quad
G_1(z) := G_0(z) R(z)
        = G_0(z) \int_0^z s^{\frac{2-\alpha}{1+\alpha}} G_0(s)^{-2} \dd s.
\]
%
%
The general solution of~\eqref{eq:28A} is a linear combination $a G_0+b G_1$ with $a,b\in\R$, but only
the solutions with $b=0$ have a chance to produce eigenfunctions of $T^*_\alpha T^\ph_\alpha$. In fact,
we have proved that $g'/z^2$ admits a finite limit as $z\to 0$ whenever $g$ is an eigenfunction. The
function $G_0$ satisfies this property, hence the combination $aG_0 + bG_1$ with any $b\neq 0$ has this
property if and only if $G_1$ does the same. We have
\[
\frac{G_1'(z)}{z^2}
= \frac{G_0'(z)}{z^2}R(z) + G_0(z)\frac{R'(z)}{z^2}
= \frac{G_0'(z)}{z^2}R(z) + z^{\frac{-3\alpha}{1+\alpha}}G_0^{-1}(z).
\]
Here the first term has a finite limit, but the second diverges for every $\alpha>0$, so no combination
with $b\neq 0$ can be an eigenfunction.\\
Moreover, Equation~\eqref{eq:24A} shows that every eigenfunction has a zero at $x=1$, so that for $f_0(x)
= G_0(x^\frac{1+\alpha}{3\alpha})$ to be an eigenfunction it is necessary to have
\begin{equation}\label{eq:29A}
\sum_{k=0}^\infty \frac{(- \frac{\alpha\omega^2}{(1+\alpha)^2})^k}{k!\prod_{j=1}^k(j-\frac{1}{1+\alpha})}
= 0
\end{equation}
so that $\frac{\alpha\omega^2}{(1+\alpha)^2}$ is a zero for the function $H_\alpha$ as given
in~\eqref{eq:21A}.
We prove now that this condition is also sufficient. In fact,
\begin{align*}
\omega^2\int_{x^{1/\alpha}}^1 \int_0^{z^\alpha} f_0(u) \dd u \dd z
&= \omega^2\int_{x^{1/\alpha}}^1 \int_0^{z^\alpha}
    \Big[\sum_{k=0}^\infty \frac{(- \frac{\alpha\omega^2}{(1+\alpha)^2})^k}{k!\prod_{j=1}^k(j-\frac{1}{1+\alpha})} u^{\frac{(1+\alpha)k}{\alpha}}
    \Big] \dd u \dd z.
\intertext{Everything converges absolutely, thus exchanging the integrals and the series we get}
&= \sum_{k=0}^\infty \frac{(- \frac{\alpha\omega^2}{(1+\alpha)^2})^k \omega^2}{k!\prod_{j=1}^k(j-\frac{1}{1+\alpha})}
   \int_{x^{1/\alpha}}^1 \int_0^{z^\alpha} u^{\frac{(1+\alpha)k}{\alpha}} \dd u \dd z.
%
\intertext{The double integration gives}
&= \sum_{k=0}^\infty \frac{(- \frac{\alpha\omega^2}{(1+\alpha)^2})^k}{k!\prod_{j=1}^k(j-\frac{1}{1+\alpha})}
  \frac{\frac{\alpha \omega^2}{(1+\alpha)^2}}{(k+1)(k+\frac{\alpha}{1+\alpha})} [1-x^{\frac{1+\alpha}{\alpha}(k+1)}]\\
&=-\sum_{k=1}^\infty \frac{(- \frac{\alpha\omega^2}{(1+\alpha)^2})^k}{k!\prod_{j=1}^k(j-\frac{1}{1+\alpha})}
  +\sum_{k=1}^\infty \frac{(- \frac{\alpha\omega^2}{(1+\alpha)^2})^k}{k!\prod_{j=1}^k(j-\frac{1}{1+\alpha})} x^{\frac{1+\alpha}{\alpha}k},
\end{align*}
which is $f_0(x)$ whenever $\omega$ satisfies~\eqref{eq:29A}, so that $f_0$ is an eigenfunction.
Formula~\eqref{eq:23A} produces $f_0$ in terms of zeros for $H_\alpha$, once~\eqref{eq:29A} is taken
account.\\
Finally, we know that eigenvalues exist and must be real and positive, since $T^*_\alpha T^\ph_\alpha$ is
compact, self-adjoint and injective. However, all computations we have done need only the fact that
eigenvalues are not zero. In particular, every zero than $H_\alpha$ has in $\C$ produces an eigenvalue
for $T^*_\alpha T^\ph_\alpha$. This proves that all complex zeros for $H_\alpha$ are actually real and
positive.
\end{proof}
\begin{corollary}\label{cor:1A}
Let $h_0(\alpha)$ be the smallest positive zero for $H_\alpha$, as given in~\eqref{eq:21A}. Then
\begin{equation}\label{eq:30A}
\|T_\alpha\|_{2,2}
= \frac{1}{1+\alpha}\sqrt{\frac{\alpha}{h_0(\alpha)}}.
\end{equation}
Moreover,
\begin{align}
\|T_\alpha\|_{2,2}
&\sim \frac{1/\sqrt{h_0(\infty)}}{\sqrt{\alpha}}
\quad \text{as}\
\alpha\to\infty,                             \label{eq:31A}\\
\|T_\alpha - T_0\|_{2,2}
&\sim \sqrt{\alpha/h_0(\infty)}
\quad \text{as}\
\alpha\to 0,                                 \label{eq:32A}
\end{align}
where $h_0(\infty)= 1.445796\ldots$ is the smallest positive zero of $H_\infty(z) =
\sum_{k=0}^\infty\frac{(-z)^k}{(k!)^2}$,
and
\begin{equation}\label{eq:33A}
\|T_\alpha\|_{2,2}
= 1 - \frac{3}{4}\alpha + O(\alpha^2)
\ \text{as}\
\alpha\to 0.
\end{equation}
\end{corollary}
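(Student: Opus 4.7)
The plan is to derive all four statements from Theorem~\ref{th:5A} combined with standard perturbation reasoning and the adjoint relation~\eqref{eq:1A}.

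First I would prove~\eqref{eq:30A}. Since $T^*_\alpha T^\ph_\alpha$ is compact, self-adjoint, and strictly positive, we have $\|T_\alpha\|_{2,2}^2 = \|T^*_\alpha T^\ph_\alpha\|_{2,2} = \max \sigma_0(T^*_\alpha T^\ph_\alpha)$. By~\eqref{eq:22A} this maximum equals $\frac{\alpha}{(1+\alpha)^2 h_0(\alpha)}$, where $h_0(\alpha)$ is the smallest zero of $H_\alpha$. Taking the square root yields~\eqref{eq:30A} immediately.

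Next I would establish~\eqref{eq:31A} by locating the limit of $h_0(\alpha)$ as $\alpha \to \infty$. Inspecting the defining formula~\eqref{eq:21A}, as $\alpha \to \infty$ the quantity $\frac{1}{1+\alpha}\to 0$, so the coefficient of $(-z)^k/k!$ tends to $1/k!$, giving $H_\alpha(z) \to \sum_{k\geq 0} (-z)^k/(k!)^2 = H_\infty(z)$. The convergence is uniform on compact subsets of $\C$ (one can bound the tails by a geometric series independent of $\alpha$), so by Hurwitz's theorem each zero of $H_\alpha$ converges to a zero of $H_\infty$, and in particular $h_0(\alpha) \to h_0(\infty)$. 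Plugging this into~\eqref{eq:30A} gives the asymptotic~\eqref{eq:31A}. For~\eqref{eq:32A} I would then invoke the adjoint relation~\eqref{eq:1A}, which in the Hilbert setting $p=q=2$ reads $\|T_0 - T_\alpha\|_{2,2} = \|(T_0 - T_\alpha)^*\|_{2,2} = \|T_{1/\alpha}\|_{2,2}$; applying~\eqref{eq:31A} with $\alpha$ replaced by $1/\alpha$ finishes this case. The numerical value $h_0(\infty) = 1.445796\ldots$ is the first positive zero of $H_\infty$, which is essentially the series for $J_0(2\sqrt{z})$ and can be tabulated.

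The last item~\eqref{eq:33A} requires a two-term expansion of $h_0(\alpha)$ as $\alpha \to 0$, which I expect to be the most delicate calculation. Setting $\beta := 1/(1+\alpha)$, the factor $1-\beta = \alpha/(1+\alpha)$ appears in every term of~\eqref{eq:21A} with $k\geq 1$, so $H_\alpha(z) = 1 - \frac{z}{1-\beta} + \frac{z^2}{2(1-\beta)(2-\beta)} - O(z^3/(1-\beta)^3)$ after factoring. The equation $H_\alpha(z)=0$ rearranges to a fixed-point form $z = (1-\beta) + \frac{z^2}{2(2-\beta)} - \cdots$, which can be solved by iteration starting from $z_0 = 1-\beta = O(\alpha)$; this gives
\[
h_0(\alpha) = \frac{\alpha}{1+\alpha}\Big(1 + \frac{1-\beta}{2(2-\beta)} + O((1-\beta)^2)\Big)
            = \alpha - \tfrac{1}{2}\alpha^2 + O(\alpha^3).
\]
Substituting this into~\eqref{eq:30A} gives $\|T_\alpha\|_{2,2}^2 = \frac{1}{(1+\alpha)^2(1 - \alpha/2 + O(\alpha^2))} = 1 - \tfrac{3}{2}\alpha + O(\alpha^2)$, and taking the square root produces~\eqref{eq:33A}. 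The main care needed here is justifying that the remainder terms in the series for $H_\alpha$ are genuinely $O(\alpha^3)$ uniformly in $z$ near $h_0(\alpha)$, which follows from the explicit bounds on the denominators $\prod_{j=1}^k(j-\beta)$ once $\beta$ is close to $1$.
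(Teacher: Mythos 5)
Your proposal is correct and follows essentially the same route as the paper: \eqref{eq:30A} is read off from the largest eigenvalue in \eqref{eq:22A}; \eqref{eq:31A} comes from locally uniform convergence $H_\alpha \to H_\infty$ plus zero localization (the paper makes your soft Hurwitz step fully quantitative through Lemmas~\ref{lem:1A}--\ref{lem:4A} and a mean value argument, which also yields a rate); \eqref{eq:32A} uses the identity $T_0 - T_\alpha = T^*_{1/\alpha}$ exactly as the paper does; and \eqref{eq:33A} is obtained, as in the paper, by removing the singular factor $1-\frac{1}{1+\alpha}$ from the zero equation and expanding the smallest zero to two terms --- the paper carries this out via Rouch\'{e} and the derivative bound \eqref{eq:35A} in the variable $\omega^2 = \frac{(1+\alpha)^2}{\alpha}z \asymp 1$, whereas you work in the variable $z \asymp \alpha$ by fixed-point iteration; these are the same argument up to rescaling. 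Your identification $H_\infty(z) = J_0(2\sqrt{z})$, which pins down $h_0(\infty) = (j_{0,1}/2)^2$, is a pleasant addition the paper does not state.

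Two repairable slips in your treatment of \eqref{eq:33A} deserve flagging. First, your displayed tail bound $O\big(z^3/(1-\beta)^3\big)$ for $H_\alpha$ is wrong: each denominator $\prod_{j=1}^k(j-\beta)$ contains the small factor $1-\beta$ only once (from $j=1$), while $\prod_{j=2}^k(j-\beta) \geq (k-1)!$, so the tail is $O\big(|z|^3/(1-\beta)\big)$. This matters: at the relevant scale $z \asymp \alpha \asymp 1-\beta$ your stated bound is $O(1)$ and would destroy the two-term expansion; the correct bound, equivalently an $O(|z|^3)=O(\alpha^3)$ remainder in your multiplied-through fixed-point equation $z = (1-\beta) + \frac{z^2}{2(2-\beta)} - \cdots$, is exactly what your subsequent computation uses, so the conclusion $h_0(\alpha) = \alpha - \tfrac12\alpha^2 + O(\alpha^3)$ stands once the bookkeeping is corrected. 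Second, ``solved by iteration'' needs an accompanying existence-and-uniqueness statement for the zero of size $\asymp\alpha$ being tracked --- some form of Rouch\'{e} or the implicit function theorem; this is precisely what the paper supplies via the estimate \eqref{eq:34A}, the lower bound $|g(\alpha,z)|\geq 1/2 + O(\alpha)$ on $|z|=3/2$, and \eqref{eq:35A}. With these two points fixed, your final expansion $\|T_\alpha\|_{2,2} = 1 - \tfrac34\alpha + O(\alpha^2)$ agrees with the paper's $z(\alpha) = 1 + \tfrac32\alpha + O(\alpha^2)$ after the rescaling.
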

\noindent %
Formulas~\eqref{eq:31A}-\eqref{eq:32A}-\eqref{eq:33A} improve the general
formulas~\eqref{eq:3A}-\eqref{eq:4A}-\eqref{eq:5A}.
%
%
%
\begin{proof}
The statement~\eqref{eq:30A} is an immediate consequence of~\eqref{eq:22A} in Theorem~\ref{th:5A}.
Claim~\eqref{eq:31A} is deduced from~\eqref{eq:30A} and a localization of the first zero for $H_\alpha$
in~\eqref{eq:21A} which is made possible via an application of Rouch\'{e} Theorem: we split this argument
into four lemmas, where for simplicity we have set $\eps := (1+\alpha)^{-1}$.
\begin{lemma}\label{lem:1A}
Let
\[
H(\eps,z) := \sum_{k=0}^\infty \frac{(-z)^k}{k!\prod_{j=1}^k(j-\eps)}.
\]
Then
\[
|H(\eps,z)- H(0,z)| \leq 5\eps e^{|z|}
\]
for every $z\in\C$, when $\eps \leq 1/8$.
\end{lemma}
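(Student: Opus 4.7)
The plan is to estimate $H(\eps,z) - H(0,z)$ by a termwise comparison in the power series. Using
$\prod_{j=1}^k(j-\eps) = k! \prod_{j=1}^k(1-\eps/j)$, I introduce
\[
r_k(\eps) := \prod_{j=1}^k \frac{1}{1 - \eps/j} \qquad (k \geq 1),\qquad r_0(\eps):=1,
\]
so that
\[
H(\eps,z) - H(0,z) = \sum_{k=1}^\infty \frac{(-z)^k}{(k!)^2}\bigl(r_k(\eps) - 1\bigr).
\]
Since $\eps \in [0,1/8]$ is real, each $r_k(\eps) - 1 \geq 0$, and the whole problem reduces to the numerical inequality
\[
r_k(\eps) - 1 \leq 5\eps\,k! \qquad \forall k \geq 1 ,
\]
for then the triangle inequality gives
\[
|H(\eps,z) - H(0,z)| \leq \sum_{k=1}^\infty\frac{|z|^k(r_k-1)}{(k!)^2} \leq 5\eps\sum_{k=0}^\infty\frac{|z|^k}{k!} = 5\eps e^{|z|},
\]
as required. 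Note that this step is valid uniformly for complex $z$, since only $|(-z)^k|=|z|^k$ enters.

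For the numerical inequality, I combine $-\log(1-x) \leq x/(1-x)$ with $\eps \leq 1/8$ to get
\[
\log r_k(\eps) = \sum_{j=1}^k -\log(1-\eps/j) \leq \frac{\eps}{1-\eps}\sum_{j=1}^k \frac{1}{j} \leq \frac{8\eps}{7}\,\mathcal{H}_k,
\]
where $\mathcal{H}_k := \sum_{j=1}^k 1/j \leq 1 + \log k$. Applying the elementary inequality $e^x - 1 \leq xe^x$ and using $\eps \leq 1/8$ once more produces
\[
r_k(\eps) - 1 \leq \frac{8\eps\mathcal{H}_k}{7}\exp\!\Big(\frac{8\eps\mathcal{H}_k}{7}\Big) \leq \frac{8\eps\mathcal{H}_k}{7}\exp\!\Big(\frac{\mathcal{H}_k}{7}\Big) \leq \frac{8\eps(1+\log k)}{7}\,e^{1/7}k^{1/7}.
\]
The remaining task is to verify $(1+\log k)\,k^{1/7}\le C\,k!$ for $k\ge 1$ with a sufficiently generous constant $C$, which is immediate from the super-exponential growth of $k!$: a direct check handles $k=1,2,3$ (all ratios lie well below $5$), and from $k\ge 4$ onwards the factorial dwarfs every other factor by a wide margin.

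I do not expect any substantive obstacle: once $r_k-1$ is isolated and controlled by $e^{8\eps\mathcal{H}_k/7}-1$, the rest is a routine termwise bookkeeping in which the constant $5$ offers plenty of slack. The tightest instance is $k=1$, where $r_1-1 = \eps/(1-\eps)\leq \tfrac{8}{7}\eps$, and even there the margin against $5\eps$ is comfortable.
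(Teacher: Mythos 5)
Your proof is correct, and it reaches the paper's bound by a genuinely different route. Both arguments begin identically — isolating the coefficient ratio $r_k(\eps)=\prod_{j=1}^k(1-\eps/j)^{-1}$ and bounding $\log r_k(\eps)$ by a constant times $\eps\,\mathcal{H}_k$ — but they diverge at the linearization of $e^y-1$. The paper uses $e^y-1\leq 2y$, which holds only for $y\in[0,1]$, and is therefore forced into a two-range decomposition: it truncates the series at the $\eps$-dependent cutoff $L=\intpart{\exp(1/(2\eps)-1)}$, treats $k\leq L$ by the linearized bound $\exp(2\eps\log(ek))-1\leq 4\eps\log(ek)$, and estimates the tail $k>L$ separately with the crude inequalities $\prod_{j=1}^k(j-\eps)\geq (2k-1)!!/2^k$ and $4^k(k!)^2/(2k)!\leq 2k-1$, finally absorbing $\log(ek)\leq k!$ to recover $e^{|z|}$. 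You instead use the global inequality $e^y-1\leq y e^y$, which costs the extra factor $\exp(8\eps\mathcal{H}_k/7)$; the hypothesis $\eps\leq 1/8$ tames this to the fixed sub-polynomial factor $e^{1/7}k^{1/7}$, and the slack of a single $k!$ (out of the $(k!)^2$ in the denominator) swallows $(1+\log k)\,k^{1/7}$ with a wide margin. The payoff is a single uniform termwise bound $r_k(\eps)-1\leq 5\eps\,k!$ valid for all $k\geq 1$ at once, so the $\eps$-dependent case split and the tail analysis disappear entirely; your numerical checks are sound (at $k=1$ one needs $\tfrac{8}{7}e^{1/7}\approx 1.32\leq 5$, at $k=2,3$ the ratios are likewise comfortable, and from $k\geq 4$ the factorial dominates trivially, e.g.\ by noting the left side grows by a factor below $1.5$ per step while $k!$ grows by at least $5$). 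In short: same skeleton (termwise comparison plus exponentiation of the log-product), but your global linearization trades the paper's bookkeeping across two ranges for one clean inequality, exploiting $\eps\leq 1/8$ more directly; the paper's version, while longer, keeps the per-term error of genuine size $O(\eps\log(ek))$ on the main range, which is sharper information than the deliberately wasteful factor $k!$ you give away — harmless here, since only the constant $5$ is at stake.
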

\begin{proof}
In fact, for $\eps\leq 1/2$ and for a suitable $L$ that we will set later, we have
\begin{align*}
|H(\eps,z)-H(0,z)|
&\leq \sum_{k=1}^{L} \frac{|z|^k}{k!^2}\Big[\prod_{j=1}^k(1-\eps/j)^{-1}-1\Big]
     +\sum_{k=L+1}^\infty \frac{|z|^k}{k!}\Big[\frac{2^k}{(2k-1)!!}+\frac{1}{k!}\Big]\\
%
%
&=    \sum_{k=1}^{L} \frac{|z|^k}{k!^2}\Big[\exp\Big(\sum_{j=1}^k \sum_{m=1}^\infty \frac{(\eps/j)^m}{m}\Big)-1\Big]
     +\!\!\sum_{k=L+1}^\infty \frac{|z|^k}{k!^2}\Big[\frac{4^k k!k!}{(2k)!}+1\Big].
\intertext{The term for $m=1$ is estimated using the inequality
$\sum_{j\leq k}1/j \leq \log(ek)$.
The remaining sum
$\sum_{m=2}^\infty \sum_{j=1}^k\frac{(\eps/j)^m}{m}$
is estimated by $\frac{\eps^2}{1-\eps}$.
The condition $\eps \leq 1/2$ is then used to prove that $\frac{\eps^2}{1-\eps}\leq \eps\log(ek)$.
Moreover, we notice that $\frac{4^k k!k!}{(2k)!} \leq 2k-1$ for $k\geq 2$. In this way we obtain the bound}
%
%
&\leq \sum_{k=1}^{L} \frac{|z|^k}{k!^2}\Big[\exp\big(2\eps\log(ek)\big)-1\Big]
     +2\sum_{k=L+1}^\infty \frac{k|z|^k}{k!^2}\\
&\leq \sum_{k=1}^{L} \frac{|z|^k}{k!^2}\Big[\exp\big(2\eps\log(ek)\big)-1\Big]
     +\frac{2}{L!}e^{|z|}.
\intertext{We fix the value of $L$ to $\intpart{\exp(1/(2\eps)-1)}$; in this way $\exp(2\eps\log(ek))-1
\leq 4\eps\log(ek)$ inside the first sum because $e^y - 1 \leq 2y$ for $y\in[0,1]$. This yields}
&\leq 4\eps \sum_{k=1}^{L} \frac{|z|^k}{k!^2}\log(ek)
     +\frac{2e^{|z|}}{L!}.
\intertext{When $\eps \leq 1/8$ we have the inequality $L! = \intpart{\exp(1/(2\eps)-1)}! \geq
\exp(1/(2\eps)-1) \geq 2/\eps$, giving}
&\leq 4\eps \sum_{k=1}^{\infty} \frac{|z|^k}{k!^2}\log(ek)
     +\eps e^{|z|}.
\end{align*}
The claim now follows, because $\log(ek)/k! \leq 1$ for every $k\geq 1$.
\end{proof}
\begin{lemma}\label{lem:2A}
$H(0,z)$ has no zeros on the circle $|z|=3/2$.
\end{lemma}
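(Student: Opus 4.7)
The plan is to identify $H(0,z)=\sum_{k=0}^\infty(-z)^k/(k!)^2$ with the Bessel function $J_0(2\sqrt{z})$, by comparison with the classical series $J_0(w)=\sum_{k=0}^\infty(-1)^k(w/2)^{2k}/(k!)^2$; only even powers of $w$ occur, so the substitution $w=2\sqrt{z}$ is single-valued. Under this substitution, the circle $|z|=3/2$ maps to $|w|=\sqrt{6}\approx 2.4495$. The zeros of $J_0$ are all real and positive, with the first two being $j_{0,1}\approx 2.4048$ and $j_{0,2}\approx 5.520$, so that $j_{0,1}<\sqrt{6}<j_{0,2}$. Consequently no zero of $J_0$ lies on $|w|=\sqrt{6}$, which is the claim.

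\textbf{Self-contained execution.} If one prefers to avoid citing Bessel-function theory, the three inputs used above can be verified directly: (i) $H(0,3/2)<0$, equivalent to $j_{0,1}<\sqrt{6}$; (ii) $H(0,x)$ stays negative on an interval containing $3/2$ and becomes positive only at some point strictly beyond, equivalent to $j_{0,2}>\sqrt{6}$; and (iii) all complex zeros of $H(0,\cdot)$ are in fact real. The first two items are handled by truncating $\sum_{k=0}^\infty(-x)^k/(k!)^2$ at a small degree $N$ and bounding the tail $\sum_{k>N}x^k/(k!)^2$ by a geometric majorant; already $N=5$ gives $H(0,3/2)\le -10^{-2}$ comfortably, and the same recipe applied in a neighbourhood of $3/2$ settles (ii). Item (iii) follows from Theorem~\ref{th:5A} applied in the limit $\alpha\to\infty$, since the coefficients of $H_\alpha$ converge uniformly on compacta to those of $H(0,\cdot)$, and uniform limits of polynomials with only real zeros preserve the reality on compact sets provided zeros stay bounded — which is the case here because the first gap is strictly positive.

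\textbf{Main obstacle.} The genuine difficulty is quantitative: since $h_0(\infty)\approx 1.446$ lies only slightly below $3/2$, the minimum of $|H(0,z)|$ on $|z|=3/2$ is of order $10^{-2}$, and any crude absolute-value bound (for instance $|H(0,z)|\ge 1-\sum_{k\ge 1}(3/2)^k/(k!)^2$) is negative and hence useless. The argument must exploit the alternating-sign structure of the series — either implicitly via the Bessel identification, or through careful partial-sum/remainder bookkeeping on the real axis — to push the second zero $j_{0,2}$ strictly beyond $\sqrt{6}$ while keeping $j_{0,1}$ strictly below it.
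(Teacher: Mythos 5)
Your proof is correct, but it follows a genuinely different route from the paper's. The paper argues by direct estimation on the circle: it splits $H(0,z)$ into the cubic partial sum $\sum_{k\leq 3}(-z)^k/(k!)^2$ plus a tail, bounds the tail on $|z|=3/2$ by $\frac{1}{4!}\big[e^{3/2}-\sum_{k\leq 3}(3/2)^k/k!\big]\leq 0.02$, and observes that the cubic's modulus on that circle is minimized at $z=3/2$ with value $1/32=0.03125$, whence $|H(0,z)|\geq 0.01125>0$. (This exploits the alternating-sign structure exactly as you predicted in your ``main obstacle'' paragraph, and the same two numerical bounds are reused in Lemma~\ref{lem:3A} for the Rouch\'e count.) You instead identify $H(0,z)=J_0(2\sqrt{z})$ --- a correct and illuminating observation, which in particular explains the constant $h_0(\infty)=1.445796\ldots=(j_{0,1}/2)^2$ quoted in Corollary~\ref{cor:1A} --- and reduce the lemma to the reality of the zeros of $J_0$ together with $j_{0,1}<\sqrt{6}<j_{0,2}$. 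That buys conceptual transparency at the cost of importing classical Bessel facts the paper avoids; the paper's computation is fully self-contained and yields an explicit positive lower bound on the circle (only positivity of the minimum is used later in the Rouch\'e step, so both versions suffice downstream).

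Two points in your self-contained variant need repair, though both are fixable. First, your item (iii) is mis-stated: $H_\alpha$ and $H(0,\cdot)$ are entire functions, not polynomials, and no ``zeros stay bounded'' proviso is relevant. The correct tool is Hurwitz's theorem: Theorem~\ref{th:5A} gives that all zeros of $H_\alpha$ are real and positive for every finite $\alpha$; Lemma~\ref{lem:1A} (or dominated convergence on the coefficients) gives $H(\eps,\cdot)\to H(0,\cdot)$ locally uniformly as $\eps\to 0^+$; and Hurwitz then forces every zero of $H(0,\cdot)$ to be a limit of real positive zeros, hence real and nonnegative, and nonzero since $H(0,0)=1$. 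Using Theorem~\ref{th:5A} here is not circular, as it is proved before, and independently of, this lemma. Second, once reality and positivity of the zeros are known, your item (ii) is superfluous: the only point of the circle $|z|=3/2$ that could possibly be a zero is $z=3/2$ itself (note $H(0,z)>0$ for real $z\leq 0$), so the single truncation estimate $H(0,3/2)\leq -10^{-2}$ from your item (i) already finishes the proof; no information about $j_{0,2}$ is needed.
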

\begin{proof}
In fact,
\[
\sum_{k=4}^\infty \frac{|z|^k}{k!^2}
\leq \frac{1}{4!}\sum_{k=4}^\infty \frac{|z|^k}{k!}
=    \frac{1}{4!}\Big[\exp(|z|)- \sum_{k=0}^3 \frac{|z|^k}{k!}\Big]
\leq 0.02
\quad
\text{when }
|z|=3/2.
\]
%
On the other hand, on the circle $|z|=3/2$, the polynomial $|\sum_{k=0}^3 (-z)^k/k!^2|$ attains its
minimum at $z=3/2$, with value $1/32 = 0.03125$;
%
%
%
hence, for $|z|=3/2$ one has $|H(0,z)|\geq |\sum_{k=0}^3(-z)^k/k!^2| - |\sum_{k=4}^\infty (-z)^k/k!^2|
\geq 0.03125-0.02 > 0$.
\end{proof}
\begin{lemma}\label{lem:3A}
$H(0,z)$ has a unique complex zero in $|z|\leq 3/2$.
\end{lemma}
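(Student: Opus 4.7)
The plan is to apply Rouché's theorem on the circle $|z|=3/2$, comparing $H(0,z)$ with its cubic truncation
\[
P(z) := \sum_{k=0}^{3}\frac{(-z)^k}{(k!)^2} = 1 - z + \frac{z^2}{4} - \frac{z^3}{36}.
\]
The estimates produced in the proof of Lemma~\ref{lem:2A} are already tailored for this: on $|z|=3/2$ one has $|H(0,z)-P(z)| = |\sum_{k=4}^\infty (-z)^k/(k!)^2| \leq 0.02$, while the minimum-modulus estimate $|P(z)|\geq 1/32 = 0.03125$ was also verified there. Hence $|H(0,z)-P(z)| < |P(z)|$ on the circle, so by Rouché's theorem $H(0,z)$ and $P(z)$ share the same number of zeros inside $|z|<3/2$. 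Since Lemma~\ref{lem:2A} already rules out zeros on the boundary $|z|=3/2$, the count in the closed disk reduces to counting the zeros of $P$ in $|z|<3/2$.

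Since $P$ is cubic it has exactly three zeros in $\C$. A direct evaluation gives $P(1) = 2/9 > 0$ and $P(3/2) = -1/32 < 0$, so the intermediate value theorem supplies a real zero $r \in (1, 3/2)$. The remaining two zeros $z_2,z_3$ satisfy, by Vieta's formulas applied to $-36\, P(z) = z^3 - 9z^2 + 36z - 36$, the relations $z_2 + z_3 = 9 - r$ and $z_2 z_3 = 36/r$. A routine check shows that the discriminant $(9-r)^2 - 144/r$ of the quotient quadratic stays negative for every $r\in(1,3/2)$, so $z_2,z_3$ form a genuine complex conjugate pair. Consequently $|z_2|^2 = |z_3|^2 = z_2 z_3 = 36/r > 24$, hence $|z_2|,|z_3| > \sqrt{24} > 3/2$, and $P$ has exactly one zero inside $|z|<3/2$. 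Combined with the previous paragraph, $H(0,z)$ also has a unique zero in $|z|\leq 3/2$.

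The only delicate step is the Rouché comparison on $|z|=3/2$, but the two sharp numerical inequalities it requires are precisely those isolated in Lemma~\ref{lem:2A}. Once these are in hand, the cubic root-count via the intermediate value theorem plus Vieta's formulas is entirely elementary, so no serious obstacle remains.
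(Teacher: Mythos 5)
Your proof is correct and follows essentially the same route as the paper: a Rouch\'{e} comparison on $|z|=3/2$ between $H(0,z)$ and the cubic truncation $P(z)=1-z+\frac{z^2}{4}-\frac{z^3}{36}$, using exactly the two numerical bounds ($0.02$ and $1/32$) isolated in Lemma~\ref{lem:2A}. The only difference is that where the paper simply asserts that the cubic has a unique zero in the disk, you verify it explicitly via the intermediate value theorem and Vieta's formulas (showing the conjugate pair has modulus $\sqrt{36/r}>\sqrt{24}$), which is a welcome filling-in of a detail the paper leaves implicit.
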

\begin{proof}
The polynomial $\sum_{k\leq 3} \frac{(-z)^k}{(k!)^2}= 1- z + \frac{z^2}{4} - \frac{z^3}{36}$ has a unique
zero in that disk.
Moreover, during the proof of Lemma~\ref{lem:2A} we have shown that on the circle $|z|=3/2$
\[
\sum_{k=4}^\infty \frac{|z|^k}{k!^2}
\leq 0.02
<    0.03125
\leq \Big|\sum_{k=0}^3\frac{(-z)^k}{k!^2}\Big|.
\]
The claim follows by Rouch\'{e} Theorem.
\end{proof}
\noindent %
By Lemma~\ref{lem:1A} on the disc $|z|\leq 3/2$ we have that $|H(\eps,z)-H(0,z)|=O(\eps)$ and
Lemma~\ref{lem:2A} shows that the minimum for $|H(0,z)|$ on the circle $|z|= 3/2$ is positive. By Rouch\'{e}
Theorem, these facts prove that functions $H(\eps,\cdot)$ and $H(0,\cdot)$ have the same number of zeros
in that disk when $\eps$ is small enough, and hence a unique zero, by Lemma~\eqref{lem:3A}.
\begin{lemma}\label{lem:4A}
$|H'(\eps,z)| \geq 0.02$ for $|z|\leq 3/2$ and $\eps < 1/100$.
\end{lemma}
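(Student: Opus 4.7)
The plan parallels the structure of Lemmas~\ref{lem:1A}--\ref{lem:3A}, applied to the $z$-derivative of $H$. Termwise differentiation of the series from Lemma~\ref{lem:1A} gives
\[
H'(\eps, z) = -\sum_{m=0}^\infty \frac{(-z)^m}{m!\prod_{j=1}^{m+1}(j-\eps)},
\qquad
H'(0, z) = -\sum_{m=0}^\infty \frac{(-z)^m}{m!(m+1)!}.
\]
First I would produce a quantitative lower bound for $|H'(0, z)|$ on $|z|\leq 3/2$, then bound the perturbation $|H'(\eps, z)-H'(0, z)|$ by a quantity of order $\eps$, and finally combine them via the triangle inequality.

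For the lower bound on $|H'(0,z)|$ I would isolate the cubic truncation $P_3(z) := -1+z/2-z^2/12+z^3/144$ and the tail $R_3(z) := H'(0, z) - P_3(z)$. On $|z|\leq 3/2$ one has
\[
|R_3(z)| \leq \sum_{m\geq 4}\frac{(3/2)^m}{m!(m+1)!} < 0.002,
\]
the $m=4$ term alone contributing $\approx 0.00176$ and the remaining terms decaying super-geometrically. The polynomial $144 P_3(z) = z^3 - 12z^2 + 72z - 144$ has derivative $3z^2-24z+72$ with negative discriminant, hence is strictly increasing on $\R$ with its unique real root in $(3,4)$; Vieta's formulas give that the two complex conjugate roots $a\pm bi$ satisfy $2a=12-r\in(8,9)$, so $|a\pm bi|\geq a>4>3/2$. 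Thus $P_3$ has no zeros in $|z|\leq 3/2$. A direct evaluation at the minimizing boundary point $z=3/2$ gives $|P_3(3/2)| = |\!{-}0.4141|\approx 0.414$, and the minimum modulus principle yields $|P_3(z)|\geq 0.414$ on $|z|\leq 3/2$, hence $|H'(0,z)|\geq 0.41$ on the same disk. That $z=3/2$ actually realises the boundary minimum can be proved rigorously either by analysing the trigonometric polynomial $|P_3((3/2)e^{i\theta})|^2$ directly (its critical-point equation is polynomial in $\cos\theta$ of degree $\leq 3$), or by an auxiliary Rouch\'e-type comparison.

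For the perturbation, the identity $\frac{1}{\prod_{j=1}^{m+1}(j-\eps)} = \frac{1}{(m+1)!}\prod_{j=1}^{m+1}(1-\eps/j)^{-1}$ produces
\[
|H'(\eps, z) - H'(0, z)| \leq \sum_{m=0}^\infty \frac{|z|^m}{m!(m+1)!}\Big[\prod_{j=1}^{m+1}(1-\eps/j)^{-1}-1\Big],
\]
which has the same shape as the sum estimated in the proof of Lemma~\ref{lem:1A} (with $m!(m+1)!$ in place of $(k!)^2$ and $\prod_{j=1}^{m+1}$ in place of $\prod_{j=1}^{k}$). Running that argument verbatim, with cutoff $L=\intpart{e^{1/(2\eps)-1}}$ and the intermediate bound $\exp(2\eps\log(e(m+2)))-1\leq 4\eps\log(e(m+2))$ for $m+1\leq L$, yields an estimate of the form $C\eps\, e^{|z|}$ with an explicit and modest $C$. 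For $\eps<1/100$ and $|z|\leq 3/2$ this is well below $0.41-0.02$, so the triangle inequality $|H'(\eps,z)|\geq |H'(0,z)|-|H'(\eps,z)-H'(0,z)|$ delivers the claim $|H'(\eps,z)|\geq 0.02$.

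The main obstacle is the rigorous justification of $\min_{|z|=3/2}|P_3(z)|\approx 0.41$: the trivial triangle bound $|P_3(z)|\geq 1-|z|/2-|z|^2/12-|z|^3/144\geq 0.039$ is too tight to survive the $\eps$-perturbation at $\eps$ as large as $1/100$ (where one already gets a perturbation of size $\approx 0.026$ from direct termwise computation). The minimum-modulus route is therefore essential; once the boundary minimum of $|P_3|$ is secured, the rest is explicit bookkeeping of constants.
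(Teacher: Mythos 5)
Your strategy is sound and, if fully executed, would prove the lemma with a much better constant; but it is genuinely different from, and considerably heavier than, the paper's argument. The paper never perturbs from $\eps=0$: it applies the triangle inequality directly to $H'(\eps,z)=\sum_{k\geq 1}\frac{k(-z)^{k-1}}{k!\prod_{j=1}^k(j-\eps)}$, isolating the $k=1$ term, of modulus $\frac{1}{1-\eps}\geq 1$, and bounding the tail termwise using only $\eps<1/100$ (so $j-\eps\geq j-1/100$) and $|z|\leq 3/2$, namely $\sum_{k\geq 2}\frac{k(3/2)^{k-1}}{k!\prod_{j=1}^k(j-1/100)}\leq 0.98$, whence $|H'(\eps,z)|\geq\frac{1}{1-\eps}-0.98\geq 0.02$ in a single display --- no Rouch\'{e}, no minimum modulus, no splitting into $H'(0,z)$ plus an $O(\eps)$ error. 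The obstruction you correctly diagnose, namely that the crude bound at $\eps=0$ (about $0.037$--$0.039$) cannot absorb a perturbation of size about $0.026$ at $\eps=1/100$, is precisely what the paper's arrangement sidesteps: keeping $\eps$ inside the denominators means the $O(\eps)$ perturbation is never paid at all, and enlarging the tail from about $0.963$ to $0.98$ is affordable against the leading term $\frac{1}{1-\eps}\geq 1$. Your route buys a sharper conclusion, $|H'(\eps,z)|\geq 0.41-O(\eps)$ instead of $0.02$, and locates where $|H'(0,\cdot)|$ is smallest, but at the price of the two steps you leave unexecuted: the claim $\min_{|z|=3/2}|P_3(z)|=|P_3(3/2)|$ is only sketched (your no-zero argument for $P_3$ on the disk, via the monotone cubic $z^3-12z^2+72z-144$ and Vieta, is complete and correct, and the reduction of the minimum to the boundary via the minimum-modulus principle is legitimate), and the constant $C$ in the Lemma~\ref{lem:1A}-style bound $|H'(\eps,z)-H'(0,z)|\leq C\eps e^{|z|}$ is asserted to be \emph{modest} but must in fact be checked to satisfy roughly $C\leq 8.7$ for the arithmetic to close at $\eps=1/100$ and $|z|=3/2$. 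In fairness, your unproved boundary-minimum assertion is of the same nature as the one the paper itself states without proof in Lemma~\ref{lem:2A}, so the level of rigor is comparable; still, if all you need is the stated $0.02$, the paper's direct estimate is both shorter and fully explicit.
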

\begin{proof}
In fact,
\begin{align*}
|H'(\eps,z)|
&= \Big|\sum_{k=1}^{\infty} \frac{k (-z)^{k-1}}{k!\prod_{j=1}^k(j-\eps)}\Big|
\geq \frac{1}{1-\eps} - \Big|\sum_{k=2}^{\infty} \frac{k (-z)^{k-1}}{k!\prod_{j=1}^k(j-\eps)}\Big|\\
&\geq \frac{1}{1-\eps} - \sum_{k=2}^{\infty} \frac{k |z|^{k-1}}{k!\prod_{j=1}^k(j-\eps)}
 \geq \frac{1}{1-\eps} - \sum_{k=2}^{\infty} \frac{k |z|^{k-1}}{k!\prod_{j=1}^k(j-1/100)}\\
&\geq \frac{1}{1-\eps} - \sum_{k=2}^{\infty} \frac{k (3/2)^{k-1}}{k!\prod_{j=1}^k(j-1/100)}
\geq \frac{1}{1-\eps} - 0.98
\geq 0.02.
\qedhere
\end{align*}
%
%
\end{proof}
\noindent %
Now we can conclude. Assume $\eps$ small enough and let $z(\eps)$ be the unique zero of $H(\eps,z)$ in
the disk $|z|\leq 3/2$. The mean value Theorem and equalities $H(0,z(0))= 0 = H(\eps,z(\eps))$ give that
\[
(z(0)-z(\eps))H'(\eps,\eta)
= H(\eps,z(0)) - H(\eps,z(\eps))
= H(\eps,z(0)) - H(0,z(0))
\]
for some $\eta$ with $|\eta|\leq 3/2$ so that by Lemma~\ref{lem:4A}
\[
|z(\eps)-z(0)|
\leq 50 |H(\eps,z(0)) - H(0,z(0))|.
\]
This relation and Lemma~\ref{lem:1A} prove that $z(\eps) \to z(0)$ as $\eps \to 0$. This conclude the
proof of~\eqref{eq:31A}.
\medskip\\
Claim~\eqref{eq:32A} follows from~\eqref{eq:31A} and the identity $T_0 - T_\alpha = T^*_{1/\alpha}$.
\medskip\\
To prove~\eqref{eq:33A} we use directly~\eqref{eq:29A} in order to handle the fact that the term
$\prod_{j=1}^k(j-\frac{1}{1+\alpha})^{-1}$ contains the factor $(1-\frac{1}{1+\alpha})^{-1}$ which
diverges as $\alpha \to 0$.
Equation~\eqref{eq:29A} can be written as
\[
f(\alpha,\omega^2)
:= g(\alpha,\omega^2)
  - \sum_{k=3}^\infty \frac{(\frac{-\alpha}{(1+\alpha)^2})^{k-1}\omega^{2k}}{k!\prod_{j=2}^k(j-\frac{1}{1+\alpha})}
 = 0
\]
where
\[
g(\alpha,z)
:= 1 + \alpha - z + \frac{\alpha z^2}{2(1+\alpha)(1+2\alpha)}.
\]
When $\alpha\to 0$, the function $g(\alpha,z)$ has a unique zero, $z_0(\alpha)$ say, located in $|z|\leq
3/2$, and $z_0(\alpha)$ behaves as $1+3\alpha/2 + O(\alpha^2)$. Via Rouch\'{e} Theorem one proves that the
same happens to the full function $f(\alpha,z)$.
In fact,
\begin{align}
\big|f(\alpha,z) - g(\alpha,z)\big|
\leq \sum_{k=3}^\infty \frac{(\frac{\alpha}{1+\alpha})^{k-1}|z|^k}{k!\prod_{j=2}^k(j-1+j\alpha)}
\leq \sum_{k=3}^\infty \frac{\alpha^{k-1}|z|^k}{k!(k-1)!}
\leq \alpha^2|z|^3 e^{\alpha|z|}.                                                                \label{eq:34A}
\end{align}
In particular, the difference is $O(\alpha^2)$ as $\alpha\to 0$ and $z$ is bounded.
On the other hand, for $|z|=3/2$
\[
\big|g(\alpha,z)\big|
\geq |1 - z| + O(\alpha)
\geq 1/2 + O(\alpha)
\]
so that (by Rouch\'{e} Theorem) also $f(\alpha,z)$ has a unique zero in $|z|\leq 3/2$, when $\alpha$ is
small enough. %
Moreover, for $|z|\leq 3/2$
\begin{equation}\label{eq:35A}
\big|f'(\alpha,z)\big|
\geq 1 - \sum_{k=2}^\infty \frac{(\frac{\alpha}{1+\alpha})^{k-1}k |z|^{k-1}}{k!\prod_{j=2}^k(j-1+j\alpha)}
\geq 1 - \sum_{k=2}^\infty \frac{\alpha^{k-1} |z|^{k-1}}{(k-1)!}
\geq 1 - \alpha|z|e^{\alpha|z|}
=    1 + O(\alpha).
\end{equation}
Let $z(\alpha)$ denote the zero of $f(\alpha,\cdot)$ in $|z|\leq 3/2$. Then $f(\alpha,z(\alpha)) = 0 =
g(\alpha,z_0(\alpha))$ and by the mean value Theorem there exists $\eta$ with $|\eta| \leq 3/2$ such that
\[
(z(\alpha)-z_0(\alpha))f'(\alpha,\eta)
= f(\alpha,z(\alpha)) - f(\alpha,z_0(\alpha))
= g(\alpha,z_0(\alpha)) - f(\alpha,z_0(\alpha))
\]
so that by~\eqref{eq:34A} and~\eqref{eq:35A}
\[
|z(\alpha) - z_0(\alpha)| \leq \frac{O(\alpha^2)}{1+O(\alpha)},
\]
i.e.,
\[
z(\alpha)
 = z_0(\alpha) + O(\alpha^2)
 = 1 +\frac{3}{2}\alpha + O(\alpha^2).
\]
The conclusion follows recalling that the norm is $1/\sqrt{z(\alpha)}$.
\end{proof}

%

\end{document}